\theoremstyle{plain}
\newtheorem{theorem}{Theorem}[section]
\newaliascnt{proposition}{theorem}
\newaliascnt{lemma}{theorem}
\newaliascnt{corollary}{theorem}
\newaliascnt{fact}{theorem}
\newaliascnt{observation}{theorem}
\newaliascnt{definition}{theorem}
\newaliascnt{example}{theorem}
\newaliascnt{question}{theorem}
\newaliascnt{remark}{theorem}
\newaliascnt{property}{theorem}
\newaliascnt{construction}{theorem}
\newaliascnt{setting}{theorem}
\theoremstyle{plain}
\newtheorem{proposition}[proposition]{Proposition}
\newtheorem{lemma}[lemma]{Lemma}
\newtheorem{corollary}[corollary]{Corollary}
\newtheorem{claim}{Claim}[theorem]
\newtheorem{fact}[fact]{Fact}
\theoremstyle{definition}
\newtheorem{definition}[definition]{Definition}
\newtheorem{example}[example]{Example}
\newtheorem{question}[question]{Question}
\theoremstyle{remark}
\newmdenv[]{siderules}
\title{Henselianity in the language of rings}
\author{Sylvy Anscombe and Franziska Jahnke}
\thanks{\today
\\This work was begun while the first author was funded by EPSRC grant EP/K020692/1.}
\address{Jeremiah Horrocks Institute, Leighton Building Le7, University of Central Lancashire, Preston PR1 2HE, United Kingdom}
\email{sanscombe@uclan.ac.uk}
\address{Institut f\"{u}r Mathematische Logik und Grundlagenforschung,
University of M\"{u}nster,
Einsteinstr. 62,
48149 M\"{u}nster,
Germany}
\email{franziska.jahnke@wwu.de}
\begin{document}
\begin{abstract}
We consider four properties 
of a field $K$ related to the existence of (definable) henselian 
valuations on $K$ and on elementarily equivalent fields and study the 
implications between them. Surprisingly, the full pictures look very 
different in equicharacteristic and mixed characteristic.
\end{abstract}
\maketitle

\section{Introduction}

The study of henselian fields in the language of rings started with a work by Prestel and Ziegler (\cite{Prestel-Ziegler78})
where they introduced and discussed $t$-henselian fields. A  
field is \emph{$t$-henselian} if it is $\mathcal{L}_\mathrm{ring}$-elementarily
equivalent to some \emph{henselian} field, i.e., a field admitting a nontrivial henselian valuation. 
In particular, they showed that non-henselian $t$-henselian fields exist. These results are
strongly linked to the question of which fields interpret nontrivial henselian valuations in the language of rings, or
equivalently, which fields admit a nontrivial 
definable henselian valuation. Here, we say that a valuation $v$ is \emph{definable} on a field $K$
if its valuation ring $\mathcal{O}_v$ is an $\mathcal{L}_\mathrm{ring}$-definable subset of $K$ (possibly with
parameters from $K$)
and that $v$ is $\emptyset$-definable if it is definable and no parameters were needed in the defining formula. 
Henselianity is an elementary property of valued fields, in particular, it is preserved under elementary equivalence
in the language $\mathcal{L}_\mathrm{val}=\mathcal{L}_\mathrm{ring}\cup \{\mathcal{O}\}$ where the unary relation symbol
$\mathcal{O}$ is interpreted as the valuation ring. Thus, 
if some nontrivial henselian valuation ring is a $\emptyset$-definable subring of $K$,
then any $L$ which is $\mathcal{L}_\mathrm{ring}$-elementarily equivalent to $K$ also admits a nontrivial henselian
valuation. In particular, if $K$ is henselian and some $\mathcal{L}_\mathrm{ring}$-elementarily
equivalent $L$ is non-henselian, then $K$ cannot admit a $\emptyset$-definable nontrivial henselian valuation. 
Under which conditions fields admit definable nontrivial henselian valuations (with or without parameters) 
has been investigated in a number of (mostly) recent papers (\cite{Hon14}, 
\cite{Jahnke-Koenigsmann15}, \cite{Jahnke-Koenigsmann14}, \cite{Koe94}, \cite{Pr14}) and 
some of these results have been applied in connection
with the Shelah-Hasson conjecture on NIP fields 
(see \cite{JSW}, \cite{WJ}, \cite{Kru15}).

The aim of this paper is to clarify the implications and relationships between these properties of a field $K$,
more precisely:
\begin{enumerate}
\item[\bf(h)] $K$  is henselian (i.e., $K$ admits a nontrivial henselian valuation),
\item[\bf(eh)] any $L$ which is ${\mathcal{L}_\mathrm{ring}}$-elementarily equivalent to $K$ is henselian,
\item[\bf($\emptyset$-def)] $K$ admits a $\emptyset$-definable nontrivial henselian valuation, and
\item[\bf (def)] $K$ admits a definable nontrivial henselian valuation.
\end{enumerate}

There are some immediate implications between these properties, as summarised in the following diagram:~
\footnote{Our convention is that such diagrams implicitly include concatenations of arrows, although we do not draw them. 
For example, \autoref{fig:1} implicitly includes the implication 
${\bf(\emptyset\textbf{-def})}\implies{\bf(h)}$.}

\begin{figure}[H]
\begin{center}
\begin{displaymath}
\xymatrix{
{\bf(\emptyset\textbf{-def})}\ar@{=>}[d]\ar@{=>}[r] & {\bf(eh)}\ar@{=>}[d] \\
{\bf(def)}\ar@{=>}[r] & {\bf(h)}
}
\end{displaymath}
\end{center}
\caption{The obvious implications}
\label{fig:1}
\end{figure}
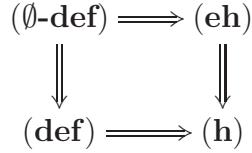

Our aim is to work out the full picture, i.e., to describe which other implications hold,
including which arrows can be reversed.
It turns out that in the class of all fields
(or even in the class $\mathcal{K}_{0}$ of all fields of characteristic zero),
no implications hold that are not already included in \autoref{fig:1} (see part (C) of \autoref{thm:main}).

In order to show this,
we use the canonical henselian valuation $v_K$ to partition $\mathcal{K}_{0}$ into
subclasses, depending on the residue characteristic of $v_{K}$:
$$\mathcal{K}_{0,0} = \{K \textrm{ field}\;|\; \mathrm{char}(K)=\mathrm{char}(Kv_K)=0\}$$
and for any prime $p$
$$\mathcal{K}_{0,p} = \{K \textrm{ field}\;|\; \mathrm{char}(K)=0 \textrm{ and }\mathrm{char}(Kv_K)=p\}.$$
See \autoref{section.Sirince} for the definition of the canonical henselian valuation and a proof that these classes
are closed under $\mathcal{L}_\mathrm{ring}$-elementary equivalence.
We then investigate the corresponding pictures with respect to these subclasses which surprisingly turn out to look rather
different in mixed characteristic and equicharacteristic $0$.
As our main result, we obtain the following
\begin{theorem}\label{thm:main}
\begin{enumerate}
\item[(A)] In the class $\mathcal{K}_{0,0}$ the complete picture is
$$\xymatrix{
{\bf(\emptyset\textbf{\rm\bf-def})}\ar@{=>}[d]\ar@{<=>}[r] & {\bf(eh)}\ar@{=>}[d] \\
{\bf(def)}\ar@{=>}[r] & {\bf(h)}
}$$
\item[(B)] For each prime $p$, in the class $\mathcal{K}_{0,p}$ the complete picture is
$$\xymatrix{
{\bf(\emptyset\textbf{\rm\bf-def})}\ar@{<=>}[d]\ar@{=>}[r] & {\bf(eh)}\ar@{<=>}[d] \\
{\bf(def)}\ar@{=>}[r] & {\bf(h)}
}$$
\item[(C)] Consequently, in the class $\mathcal{K}_{0}$ the complete picture is given by \autoref{fig:1}.
\end{enumerate}
\end{theorem}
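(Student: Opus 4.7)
My plan is to prove parts (A) and (B) separately and then obtain (C) by intersecting the two pictures. Since \autoref{section.Sirince} establishes that $\mathcal{K}_{0,0}$ and each $\mathcal{K}_{0,p}$ are closed under $\mathcal{L}_\mathrm{ring}$-elementary equivalence and partition $\mathcal{K}_{0}$, any implication holding universally on $\mathcal{K}_{0}$ must hold in both (A) and (B); inspection shows the common arrows are exactly those of \autoref{fig:1}, so (C) will follow at once.

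For part (A), the only non-obvious arrow beyond \autoref{fig:1} is eh $\Rightarrow$ $\emptyset$-def. Given $K \in \mathcal{K}_{0,0}$ for which every $L \equiv_{\mathcal{L}_\mathrm{ring}} K$ is henselian, I would analyse the canonical henselian valuation $v_{K}$ and argue that the eh hypothesis forces the residue field $Kv_K$ to be non-``$t$-henselian-symmetric'' in the Prestel--Ziegler sense; combining this with the definability criteria for canonical henselian valuations in residue characteristic zero from \cite{Jahnke-Koenigsmann14} and \cite{Jahnke-Koenigsmann15} should yield the $\emptyset$-definability of (some henselian coarsening of) $v_{K}$. The three missing non-implications (def $\not\Rightarrow$ eh, h $\not\Rightarrow$ def, h $\not\Rightarrow$ eh) require explicit counterexamples: the classical Prestel--Ziegler construction of a non-henselian $t$-henselian field directly supplies an instance of h $\not\Rightarrow$ eh (by taking the henselian partner as $K$), while the other two can be engineered from iterated power series fields over suitably tuned bases.

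For part (B), the non-trivial arrows are def $\Rightarrow$ $\emptyset$-def and h $\Rightarrow$ eh. Both proceed through the canonical $p$-henselian valuation $v_{K}^{p}$, which by the work of Jahnke--Koenigsmann is always $\emptyset$-definable on fields in $\mathcal{K}_{0,p}$. In mixed characteristic $v_{K}^{p}$ sits in a tight canonical relationship with every henselian valuation on $K$, so a parameter-definable henselian valuation can be exchanged for a $\emptyset$-definable coarsening or refinement of $v_{K}^{p}$, giving def $\Rightarrow$ $\emptyset$-def. For h $\Rightarrow$ eh I would express henselianity of a mixed-characteristic field as a scheme of first-order $\mathcal{L}_\mathrm{ring}$-sentences involving $v_{K}^{p}$ and the residue-field theory, so that the henselianity of $K$ becomes a consequence of $\mathrm{Th}_{\mathcal{L}_\mathrm{ring}}(K)$ and is transferred automatically to every $L \equiv K$. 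The remaining non-implication h $\not\Rightarrow$ def is established by constructing a henselian mixed-characteristic field on which no henselian valuation is visible in the ring language.

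The main obstacle is the implication h $\Rightarrow$ eh in mixed characteristic: by the same theorem it cannot be strengthened to h $\Rightarrow$ $\emptyset$-def, since that would contradict h $\not\Rightarrow$ def, also asserted in (B). Consequently, the henselianity of each elementarily equivalent model must be witnessed by something weaker than a $\emptyset$-definable henselian valuation on it. The delicate point is to extract from the $\emptyset$-definable canonical $p$-henselian valuation $v_{K}^{p}$ a first-order $\mathcal{L}_\mathrm{ring}$-consequence that implies the mere \emph{existence} of some henselian valuation on any model of $\mathrm{Th}(K)$ without exhibiting one explicitly; this two-step, ``existential but not definable'' transfer is the technical heart of the argument.
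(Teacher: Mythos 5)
Your top-level decomposition (prove (A) and (B) separately, then get (C) by intersecting the pictures over the partition of $\mathcal{K}_0$ into $\mathcal{K}_{0,0}$ and the $\mathcal{K}_{0,p}$) matches the paper, and your logic for (C) is sound. But the two implications you treat as consequences of existing definability criteria each conceal a genuine gap. For (A), the criteria you invoke (Theorem B of \cite{Jahnke-Koenigsmann15}) only yield a henselian valuation definable \emph{with one parameter}; no known criterion upgrades this directly to $\emptyset$-definability. The missing idea is model-theoretic, not valuation-theoretic: since \emph{every} $L\equiv K$ is elementarily henselian, every such $L$ admits a $1$-parameter-definable henselian valuation, so by the Omitting Types Theorem one of the partial types ``$b$ is a parameter making $\phi(x;b)$ a nontrivial henselian valuation ring'' must be principal; the isolating formula then gives a $\emptyset$-definable \emph{family} of henselian valuation rings, whose intersection is $\emptyset$-definable and (using comparability of henselian valuations when $H_2(K)=\emptyset$, or Theorem A of \cite{Jahnke-Koenigsmann15} otherwise) is itself a nontrivial henselian valuation ring. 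Without this step your argument does not reach ($\emptyset$-\textbf{def}).

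For (B), your route through the canonical $p$-henselian valuation rests on a false premise: $v_K^p$ is \emph{not} always $\emptyset$-definable on fields in $\mathcal{K}_{0,p}$ --- the uniform definability result of \cite{Jahnke-Koenigsmann14} requires $\zeta_q\in K$, $K\neq K(q)$, and non-orderability when $q=2$, which is exactly why the paper's proof of (\textbf{def})$\Rightarrow$($\emptyset$-\textbf{def}) must pass to a uniformly interpretable family of finite extensions $L/K$ (for an auxiliary prime $q$ that need not equal $p$) and then descend the canonical $q$-henselian rings $\mathcal{O}_L^q\cap K$ back to $K$ through a case analysis on unions and intersections. More importantly, the step you yourself single out as the ``technical heart'' --- transferring mere existence of a henselian valuation to all $L\equiv K$ --- is left unresolved in your proposal, and the $p$-henselian machinery is the wrong tool for it ($p$-henselianity does not imply henselianity). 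The paper's argument is entirely different and much softer: by compactness one embeds $L$ elementarily into an elementary extension $(K^*,v_K^*)$ of $(K,v_K)$; relative algebraic closedness of $L$ in $K^*$ makes the restricted valuation henselian, and tracking residue characteristics shows that ``$K$ admits a mixed-characteristic henselian valuation'' is preserved under $\equiv$; such valuations are automatically nontrivial, giving (\textbf{h})$\Rightarrow$(\textbf{eh}). Finally, the counterexample for (\textbf{h})$\not\Rightarrow$($\emptyset$-\textbf{def}) in $\mathcal{K}_{0,p}$ is not routine: it requires constructing non-henselian $t$-henselian residue fields of characteristic $p$ of divisible-tame type and a self-similarity argument via the Ax--Kochen/Ershov principle for tame fields, none of which is sketched in your proposal.
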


The paper is organized as follows.
In the next subsection (\autoref{subsection:prelim}), we introduce the basic terminology which we use
throughout the paper 
and discuss the implications and non-implications in our diagrams which are already known. 

In \autoref{section.Sirince},
we recall the definition of the canonical henselian valuation $v_K$ and show that certain properties of the valued field
$(K,v_K)$ are preserved under elementary equivalence in $\mathcal{L}_\mathrm{ring}$ (\autoref{prp:Sirince}). 
In particular,
we obtain that the classes $\mathcal{K}_{0,0}$ and $\mathcal{K}_{0,p}$ (for a fixed prime $p$) are closed under  
$\mathcal{L}_\mathrm{ring}$-elementary equivalence.

In \autoref{section:equicharacteristic.zero}, we show part (A) of \autoref{thm:main}. In order to do this, we first
show the implication which occurs in the picture in (A) but not in \autoref{fig:1} (see \autoref{prp:eh}). 
We then combine this with the examples discussed in \autoref{subsection:prelim} to complete the proof of 
\autoref{thm:main} (A) (see \autoref{proof:A}).

The proof of part \autoref{thm:main} (B) takes some more work. Section \ref{section:divisible-tame} treats the constructions which we use to show the non-implications in the diagram: The main result
of this section is the existence of non-henselian $t$-henselian fields $K$ of any 
characteristic such that there is some tame henselian
$L \equiv K$ with divisible value group (see \autoref{prp:near.hens.divisible.tame}). 

In \autoref{sec:mc.def}, we use the fields constructed in \autoref{section:divisible-tame} and the machinery
developed in \autoref{section:self-similar} to show that for every prime $p$,
there are fields in $\mathcal{K}_{0,p}$ which do not admit $\emptyset$-definable nontrivial henselian valuations (see 
\autoref{ex:mc.no.def}). We then go on to show that for every prime $p$ and every $K \in \mathcal{K}_{0,p}$,
the properties ({\bf def}) and  ($\emptyset$-{\bf def}) are equivalent (see \autoref{thm:mix.char}). Finally, we assemble the facts
we have shown about fields in $\mathcal{K}_{0,p}$ to prove \autoref{thm:main} (B) in \autoref{proof:B}.

\subsection{Preliminaries and known results}\label{subsection:prelim} \label{subsection:JK}
For basic definitions and notions regarding valuation theory, we refer the reader to \cite{EP05}.
We use the following notation: If $(K,v)$ is a valued field, we let
$\mathcal{O}_{v}$ denote the valuation ring,
$\mathfrak{m}_{v}$ denote the maximal ideal,
$Kv$ denote the residue field,
and $vK$ denote the value group.

We call a property of fields is {\em $\mathcal{L}_{\mathrm{ring}}$-elementary} if it is preserved under $\mathcal{L}_{\mathrm{ring}}$-elementary equivalence.
We do not require that the class of fields satisfying that property is an $\mathcal{L}_{\mathrm{ring}}$-elementary class.
The properties {\bf(eh)} and {\bf ($\emptyset$-def)} are obviously $\mathcal{L}_{\mathrm{ring}}$-elementary. 
The existence of non-henselian $t$-henselian fields (first shown by Prestel and Ziegler in \cite[p.\,338]{Prestel-Ziegler78}) shows that
\begin{enumerate}
\item {\bf(h)} is not
$\mathcal{L}_{\mathrm{ring}}$-elementary and
\item ${\bf(h)}$ does not imply ${\bf(eh)}$ for fields in $\mathcal{K}_{0,0}$.
\end{enumerate}
Consequently, {\bf(h)} does not imply {\bf(eh)} for all fields in $\mathcal{K}_{0}$.
The recent paper of Jahnke and Koenigsmann
(\cite{Jahnke-Koenigsmann15})
includes two key examples which are the starting point of our investigation, namely

\begin{example}[Example 6.2, \cite{Jahnke-Koenigsmann15}]
\label{eg:1}
This is an example of a henselian field which does not admit a nontrivial definable henselian valuation. 
In fact, the field $K$ constructed in this example is in the class $\mathcal{K}_{0,0}$ and is 
$\mathcal{L}_\mathrm{ring}$-elementarily equivalent to some
non-henselian field $L$. 
\end{example}

\begin{example}[Example 6.3, \cite{Jahnke-Koenigsmann15}]
\label{eg:2}
This is an example of a henselian field which does admit a nontrivial definable henselian valuation but does not admit a nontrivial $\emptyset$-definable henselian valuation. In fact, the field $K$ constructed is again in the class $\mathcal{K}_{0,0}$ and
 $\mathcal{L}_\mathrm{ring}$-elementarily equivalent to some
non-henselian field $L$. 
\end{example}
\noindent Thus, we get
\begin{enumerate}
\setcounter{enumi}{2}
\item ${\bf(def)}$ is not
$\mathcal{L}_{\mathrm{ring}}$-elementary,
\item ${\bf(h)}$ does not imply ${\bf(def)}$ in $\mathcal{K}_{0,0}$ (and hence in $\mathcal{K}_0$), and
\item ${\bf(def)}$ does not imply ${\bf(\emptyset\textbf{-def})}$  in $\mathcal{K}_{0,0}$ (and hence in $\mathcal{K}_0$).
\end{enumerate}
However, even in the equicharacteristic zero setting there are unanswered questions. 
Perhaps the most obvious is the following, which is labelled `Question 5.6' in \cite{Jahnke-Koenigsmann15}.
\begin{question}
Does ${\bf(eh)}$ imply ${\bf(\emptyset\textbf{-def})}$?
\end{question}
We answer this question negatively for the class of all fields $\mathcal{K}$, however, we show that it does hold
when we restrict our attention to $\mathcal{K}_{0,0}$ (see \autoref{prp:eh}).

\section{The canonical henselian valuation}
\label{section.Sirince}

Recall that any henselian field $K$ may admit many
non-trivial henselian valuations.
However, unless $K$ is separably closed, 
these all induce the same topology on $K$. 
This fact ensures that there is always a canonical one among the henselian valuations on a field.
The \emph{canonical henselian valuation} $v_K$ on $K$
is defined as follows: We divide the class of henselian valuations on $K$ into
subclasses, namely
$$H_1(K) = \{ v \textrm{ henselian on }K\;|\; Kv\textrm{ not separably closed} \}$$
and
$$H_2(K) = \{ v \textrm{ henselian on }K\;|\; Kv\textrm{ separably closed} \}$$
If $H_2(K) \neq \emptyset$, i.e., if $K$ admits a henselian valuation 
with separably closed residue field, then $v_K$ is the (unique) coarsest such. In particular, we have
$v_K \in H_2(K)$. In this case, any
henselian valuation with non-separably closed residue field is a proper coarsening
of $v_K$ and any henselian valuation with separably closed residue field
is a refinement of $v_K$.

If  $H_2(K)= \emptyset$, i.e., if $K$ admits no henselian valuations with separably closed residue field, then $v_K$
is the (unique) finest henselian valuation on $K$ and any two henselian valuations
on $K$ are comparable. In this case, we have $v_K \in H_1(K)$.

Note that whenever $K$ admits some nontrivial henselian valuation then $v_K$ is nontrivial, i.e.,
we have $\mathcal{O}_{v_K}\subsetneq K$.
See \cite[\S 4.4]{EP05} for more details and proofs.

We now show that certain key properties of the valued field $(K,v_{K})$ are in fact $\mathcal{L}_{\mathrm{ring}}$-elementary properties of $K$.

\begin{proposition}\label{prp:Sirince}
The following properties of a field $K$ are $\mathcal{L}_{\mathrm{ring}}$-elementary:
\begin{enumerate}
\item `$v_{K}\in H_{2}(K)$',
\item `$v_{K}$ has residue characteristic $p$',
\item `$v_{K}$ has residue characteristic zero', and
\item `$K$ admits a henselian valuation of mixed characteristic $(0,p)$';
\end{enumerate}
for any given prime $p$.
\end{proposition}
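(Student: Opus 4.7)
The plan is to express each of the four properties as a (possibly infinite) set of $\mathcal{L}_{\mathrm{ring}}$-sentences, and hence to show it is preserved under $\mathcal{L}_{\mathrm{ring}}$-elementary equivalence. The approach builds on the Prestel--Ziegler theorem that $t$-henselianity is $\mathcal{L}_{\mathrm{ring}}$-elementary, refined to detect the residue characteristic and separable closedness of the residue field. The crucial input, which I would establish first, is that for each $q\in\{0\}\cup\{\text{primes}\}$ the properties
\textbf{(A$_q$)} ``$K$ admits a nontrivial henselian valuation of residue characteristic $q$'' and
\textbf{(B$_q$)} ``$K$ admits a nontrivial henselian valuation with separably closed residue field of residue characteristic $q$''
are both $\mathcal{L}_{\mathrm{ring}}$-elementary. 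Both can be axiomatized by strengthening the Prestel--Ziegler axioms for $t$-henselianity: for \textbf{(A$_q$)}, by axioms forcing topological nilpotency of $q$ in the canonical V-topology; for \textbf{(B$_q$)}, by additional axioms requiring every monic polynomial of each given degree to have a root modulo the maximal ideal, thereby forcing separable closedness of the residue field.

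Granted these, the four properties follow. Property (1) is equivalent to the disjunction $\bigvee_{q} \textbf{(B}_q\textbf{)}$, and property (4) is equivalent to ``$\mathrm{char}(K)=0$ and \textbf{(A$_p$)}''. For property (2), I would use the characterization: for $p$ prime, $v_K$ has residue characteristic $p$ iff \textbf{(A$_p$)} holds and, for every $q\neq p$ in $\{0\}\cup\{\text{primes}\}$, \textbf{(B$_q$)} fails. The forward direction uses that any henselian valuation $v$ with separably closed residue field refines $v_K$ (when $v_K\in H_2(K)$), so $\mathfrak{m}_{v_K}\subseteq\mathfrak{m}_v$ forces the residue characteristic of $v$ to match that of $v_K$; the converse splits on whether $v_K\in H_1(K)$ or $v_K\in H_2(K)$, using that in the first case $v_K$ is the finest henselian valuation (so any coarsening of res.\ char.\ $p$ pushes $p$ up into $\mathfrak{m}_{v_K}$) and in the second case $v_K$ is the coarsest such valuation (so absence of witnesses for $q\neq p$ pins the residue characteristic). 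Finally, property (3) is expressed as ``$\mathrm{char}(K)=0$ and for every prime $p$, $v_K$ does not have residue characteristic $p$'', which is a conjunction of elementary conditions by (2).

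The main obstacle is the key step: establishing \textbf{(A$_q$)} and \textbf{(B$_q$)} as elementary. The subtlety is that \textbf{(A$_q$)} already implies $K$ is henselian, yet henselianity alone is not $\mathcal{L}_{\mathrm{ring}}$-elementary by the Prestel--Ziegler examples of non-henselian $t$-henselian fields. The resolution is that the strengthened conditions, axiomatized via the V-topology of $t$-henselianity together with topological nilpotency of $q$ and polynomial root conditions, form consistent $\mathcal{L}_{\mathrm{ring}}$-theories; the non-henselian $t$-henselian counterexamples simply fail these additional axioms. Here one leverages the existing machinery from Koenigsmann and Jahnke--Koenigsmann on elementarily definable classes of $p$-henselian fields.
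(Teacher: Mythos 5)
Your reduction hinges on the claim that, for every $q$ ranging over $0$ and the primes, the property \textbf{(A$_q$)} ``$K$ admits a nontrivial henselian valuation of residue characteristic $q$'' is $\mathcal{L}_{\mathrm{ring}}$-elementary. This is false for $q=0$: the Prestel--Ziegler construction (see \autoref{eg:1}) gives a henselian field $K\in\mathcal{K}_{0,0}$ --- so $K$ satisfies \textbf{(A$_0$)} --- which is $\mathcal{L}_{\mathrm{ring}}$-elementarily equivalent to a field $L$ admitting no nontrivial henselian valuation at all; and \autoref{prp:near.hens.divisible.tame} produces the analogous counterexample to \textbf{(A$_p$)} in equicharacteristic $p$. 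Your proposed resolution --- that the non-henselian $t$-henselian fields ``simply fail the additional axioms'' --- is a non sequitur: if \textbf{(A$_0$)} were axiomatizable, or merely preserved under $\equiv$, then $L\equiv K$ would have to satisfy it, and it does not; consistency of a strengthened axiom system says nothing about preservation. The only regime in which \textbf{(A$_q$)} is elementary is mixed characteristic, but there \textbf{(A$_p$)} restricted to characteristic-zero fields \emph{is} item (4) of the proposition, so your reduction of (4) to \textbf{(A$_p$)} is circular; and the step you defer --- passing from ``$K$ satisfies $t$-henselianity axioms with $p$ topologically nilpotent'' to ``$K$ actually admits a henselian valuation'' --- is exactly the nontrivial content, which the Prestel--Ziegler examples show cannot be extracted from topological axioms alone without a genuine argument. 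The same gap infects \textbf{(B$_q$)} and hence your treatment of items (2) and (3), which are assembled as Boolean combinations of the \textbf{(A$_q$)} and \textbf{(B$_q$)}. (A smaller point: for a non-henselian field of characteristic $p$, $v_K$ is trivial and has residue characteristic $p$ while \textbf{(A$_p$)} fails, so your biconditional for (2) is not even extensionally correct without an extra case.)

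The paper's proof works entirely without axiomatizing anything. Given $L\equiv K$, one uses compactness to obtain an elementary extension $(K,v_K)\preceq(K^*,v_K^*)$ into which $L$ embeds elementarily, and restricts $v_K^*$ to $L$; since $L$ is relatively algebraically closed in $K^*$, this restriction $w$ is henselian (possibly trivial), and $Lw$ is relatively separably closed in $K^*v_K^*$. Properties (1)--(4) are precisely those that survive this restriction and then propagate to $v_L$ via the $H_1/H_2$ dichotomy, whereas bare henselianity does not (because $w$ may be trivial). If you want to salvage an axiomatization-based route, you would at minimum have to first prove, rather than assume, a transfer statement of this kind.
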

\begin{proof}
Let $L\equiv K$ be a pair of elementarily equivalent fields. In each case we suppose that the relevant property holds in $K$ and show that it also holds in $L$.

\begin{enumerate}
\item Assume that $v_{K}\in H_{2}(K)$. By compactness, there exists an elementary extension $(K,v_{K})\preceq(K^{*},v_{K}^{*})$ such that $L$ elementarily embeds into $K^{*}$; we identify $L$ with its image under this elementary embedding. Let $w$ denote the restriction of $v_{K}^{*}$ to $L$. Since $L$ is relatively algebraically closed in $K^{*}$, $(L,w)$ is henselian. By Hensel's Lemma, $Lw$ is relatively separably algebraically closed in $K^{*}v_{K}^{*}$, and the latter is separably closed. Thus $Lw$ is separably closed. Therefore we get $w\in H_{2}(L)$ and hence $H_2(L)\neq \emptyset$. We conclude $v_L \in H_2(L)$.
\end{enumerate}

Both parts (2) and (3) follow from the following claim.

\begin{claim}
If $K\equiv L$, then the residue characteristics of $v_{K}$ and $v_{L}$ are equal.
\end{claim}
\begin{proof}[Proof of claim]
We will distinguish two cases, based on whether or not $H_{2}(K)$ is empty. By part (1), $H_{2}(K)$ is empty if and only 
if $H_{2}(L)$ is empty. In each case we will use again the construction from part (1) in which we identify $L$ with an elementary subfield of $K^{*}$, where $(K^{*},v_{K}^{*})$ is an elementary extension of $(K,v_{K})$. We let $w$ denote the restriction of $v_{K}^{*}$ to $L$. Since $L$ is relatively algebraically closed in $K^{*}$, $w$ is henselian; and thus $Lw$ is relatively separably closed in $K^{*}v_{K}^{*}$.
\begin{enumerate}
\item[(i)] First we suppose that $H_{2}(K)=\emptyset$. It suffices to show that if one of $(K,v_{K})$ and $(L,v_{L})$ has residue characteristic $p$, then so has the other. Without loss of generality, we suppose that $\mathrm{char}(Kv_{K})=p$. Then $\mathrm{char}(K^{*}v_{K}^{*})=p$; and since $w$ is a restriction of $v_{K}^{*}$, we have that $\mathrm{char}(Lw)=p$.
As $H_{2}(L)=\emptyset$ holds, $v_{L}$ is a (possibly improper) refinement of $w$. Thus $\mathrm{char}(Lv_{L})=p$, as required.

\item[(ii)] Next we suppose that $H_{2}(K)\neq\emptyset$. We first show that if one of $(K,v_{K})$ and $(L,v_{L})$ has residue characteristic zero, then so has the other. Without loss of generality, we suppose that $\mathrm{char}(Kv_{K})=0$. Then $\mathrm{char}(Lw)=0$. Since $v_{K}\in H_{2}(K)$, $Kv_{K}$ and $K^{*}v_{K}^{*}$ are separably closed fields. Since $Lw$ is relatively separably closed in $K^{*}v_{K}^{*}$, $Lw$ is also separably closed. Thus $w$ is a (possibly improper) refinement of $v_{L}$. Thus $\mathrm{char}(Lv_{L})=0$, as required.

Now, assume $\mathrm{char}(Kv_K) =p >0$. In particular, for any $w$ henselian on $K$ we have $\mathrm{char}(Kw)\in \{0,p\}$. 
Take any elementary extension $M$ of $K$. 
Then, we have $\mathrm{char}(Mv_M)>0$ by the above, and the restriction of $v_M$ to $K$ 
is a henselian valuation of mixed characteristic. We conclude $\mathrm{char}(Kv_K)= \mathrm{char}(Mv_M)$.
For any $L\equiv K$ there is some $M$ such that both $K$ and $L$ embed elementarily into $M$. Thus, we get
$\mathrm{char}(Kv_K)=\mathrm{char}(Lv_L)$.
\end{enumerate}
This completes the proof of the claim.
\end{proof}

\begin{enumerate}
\setcounter{enumi}{3}
\item Suppose that $K$ admits a henselian valuation $v$ of mixed characteristic $(0,p)$, for a prime $p$. If $v_{K}$ is of mixed characteristic $(0,p)$ then we simply apply part (3). Otherwise $v_{K}$ is of residue characteristic zero and $v$ is a proper refinement of $v_{K}$. Thus $v_{K}\in H_{2}(K)$, and both $Kv_{K}$ and $Kv$ are separably closed fields. By parts (1) and (2), $Lv_{L}$ is also a separably closed field of characteristic zero. Such fields always carry nontrivial mixed-characteristic henselian valuations.\qedhere
\end{enumerate}
\end{proof}

\begin{corollary}\label{cor:mc} Let $K$ be a non-separably closed field.
The property
\begin{enumerate}
\item[\bf(mc)] `$K$ admits some mixed characteristic henselian valuation'
\end{enumerate}
implies that $K$ is elementarily henselian.
\end{corollary}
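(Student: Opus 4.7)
The plan is to reduce immediately to \autoref{prp:Sirince}(4). Let $v$ be the given mixed-characteristic henselian valuation on $K$, and set $p := \mathrm{char}(Kv)$; since $v$ is of mixed characteristic we automatically have $\mathrm{char}(K) = 0$ and $p > 0$, so $v$ is a henselian valuation of mixed characteristic $(0,p)$ in the sense of part~(4) of the proposition. Thus $K$ witnesses the property ``admits a henselian valuation of mixed characteristic $(0,p)$''.

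By \autoref{prp:Sirince}(4), this property is $\mathcal{L}_{\mathrm{ring}}$-elementary. Hence any $L \equiv K$ also admits some henselian valuation $w$ of mixed characteristic $(0,p)$. Such a $w$ is automatically nontrivial: if $w$ were the trivial valuation on $L$ then $Lw = L$ would have characteristic $0$, contradicting $\mathrm{char}(Lw) = p$. Therefore $L$ is henselian; as $L \equiv K$ was arbitrary, $K$ is elementarily henselian.

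The entire substance of the argument is delegated to \autoref{prp:Sirince}(4), so no genuine obstacle arises at this stage. The ``non-separably closed'' hypothesis does not enter the proof directly; it is a natural standing assumption, since for separably closed $K$ the conclusion is anyway essentially vacuous (in positive characteristic the hypothesis $(\textbf{mc})$ fails, while in characteristic zero $K$ is algebraically closed and hence trivially elementarily henselian via ACF$_0$).
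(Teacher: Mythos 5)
Your proof is correct and follows the paper's own argument exactly: both reduce the statement to part~(4) of \autoref{prp:Sirince} and observe that a mixed-characteristic henselian valuation is automatically nontrivial. Your closing remark that the non-separably-closed hypothesis is not actually used in the argument is also accurate.
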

\begin{proof}
By part (4) of Proposition \ref{prp:Sirince}, all fields $L$ elementarily equivalent to $K$ admit mixed characteristic henselian valuations. Such valuations are necessarily nontrivial. Thus $L$ is henselian.
\end{proof}

As the contrapositive of Corollary \ref{cor:mc}, we obtain: if $K$ is a non-separably closed non-elementarily henselian field 
then all henselian valuations on fields $L\equiv K$ are equicharacteristic and $H_{2}(L)=\emptyset$.

\section{Fields of equicharacteristic zero}
\label{section:equicharacteristic.zero}

In this section, we show part (A) of \autoref{thm:main}. Note that we only 
need to show one further arrow to complete the picture, namely
({\bf eh}) $\implies$ ($\emptyset$-{\bf def}). This is done in \autoref{section:eh.implies.0-def}. Afterwards,
in \autoref{proof:A}, we explain why combined with the results in \autoref{subsection:prelim}, this indeed proves \autoref{thm:main} part (A).
\subsection{`Elementarily henselian' implies `$\emptyset$-definable'}
\label{section:eh.implies.0-def}
In this subsection, we show why in the class $\mathcal{K}_{0,0}$ of fields $K$ with $\mathrm{char}(Kv_K)=0$, the
implication ({\bf eh}) $\implies$ ($\emptyset$-{\bf def}) holds.
We will apply the following theorem from \cite{Jahnke-Koenigsmann15}.

\begin{theorem}[Theorem B, \cite{Jahnke-Koenigsmann15}]\label{thm:JK.B}
Let $K$ be a non-separably closed henselian field. 
Then $K$ admits a definable nontrivial henselian valuation (using at most $1$ parameter) unless
\begin{enumerate}
\item $Kv_{K}\neq Kv_{K}^{\mathrm{sep}}$, and 
\item $Kv_{K}\preceq L$ for some henselian $L$ with $v_{L}L$ divisible, and
\item $v_{K}K$ is divisible.
\end{enumerate}
\end{theorem}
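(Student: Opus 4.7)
The plan is to argue by contrapositive: I show that whenever any one of the conditions (1), (2), (3) fails, one can write down an $\mathcal{L}_{\mathrm{ring}}$-formula $\phi(x;t)$ with at most one parameter $t\in K$ whose solution set is the valuation ring of a nontrivial henselian valuation on $K$. Because $K$ is non-separably closed, every henselian valuation on $K$ is comparable with $v_K$, so in each case it suffices to produce a definable nontrivial valuation ring that is comparable with $\mathcal{O}_{v_K}$; henselianity then follows automatically.

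If (1) fails, so $Kv_K$ is separably closed and $v_{K}\in H_{2}(K)$, then Koenigsmann's theorem on the $\emptyset$-definability of the canonical $p$-henselian valuation (for a suitable prime $p$, chosen using the $p$-arithmetic of $K$) furnishes a $\emptyset$-definable nontrivial henselian valuation on $K$ directly. If (3) fails, fix a prime $p$ with $p\cdot v_{K}K\subsetneq v_{K}K$ and a parameter $t\in K^{\times}$ with $v_{K}(t)>0$ and $v_{K}(t)\notin p\cdot v_{K}K$; then a single-parameter formula in the spirit of those of \cite{Koe94,Jahnke-Koenigsmann15} --- for instance, one asserting that $1+tx^{p}$ is a $p$-th power --- cuts out a nontrivial henselian coarsening of $v_{K}$, by a standard henselianity-of-polynomials argument exploiting the asymmetry of $v_{K}(t)$ with respect to $p$.

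The main obstacle is the case where only (2) fails, i.e.\ $Kv_{K}$ is not elementarily equivalent to any henselian field with divisible value group. One must translate this purely model-theoretic hypothesis on the residue field into a concrete formula on $K$ itself. The strategy is to extract, from the failure of (2), a $\emptyset$-definable nontrivial henselian valuation $w$ on $Kv_{K}$ --- the heuristic being that $Kv_{K}$ cannot resemble the generic tame henselian residue fields, so some first-order obstruction must be present and exploitable via Koenigsmann-type machinery applied inside $Kv_{K}$ --- and then to use $w$ to define the composition $w\circ v_{K}$, a henselian refinement of $v_{K}$, by an $\mathcal{L}_{\mathrm{ring}}$-formula on $K$ with at most one parameter. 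Making the extraction precise and, crucially, ensuring that $w\circ v_{K}$ is definable on $K$ as a whole (not merely interpretable in $Kv_{K}$ after quotienting by $\mathcal{O}_{v_{K}}$) is the heart of the argument, and relies on the fact that in this subcase one still has enough flexibility (either via the partial $p$-structure of $K$, or via the specific formula witnessing $w$) to lift residue-field definability back to the full field.
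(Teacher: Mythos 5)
This statement is quoted from \cite{Jahnke-Koenigsmann15} (Theorem B) and is not proved in the present paper, so there is no internal proof to compare against; I can only assess your argument on its own merits. Your overall logical decomposition (show that the failure of any one of (1)--(3) yields a $1$-parameter definable nontrivial henselian valuation) is the right shape, and your treatments of the failures of (1) (via the $\emptyset$-definability of canonical $q$-henselian valuations, i.e.\ Theorem A of \cite{Jahnke-Koenigsmann15}) and of (3) (via a power-predicate formula exploiting a value $v_K(t)\notin p\cdot v_KK$, with the usual care needed when the only such $p$ equals the residue characteristic) are in the spirit of the actual proof.

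However, your plan for the case where (2) fails contains a fatal structural error. Note first that this case only matters when (1) holds, i.e.\ $Kv_K\neq Kv_K^{\mathrm{sep}}$, which forces $H_2(K)=\emptyset$ and hence $v_K$ to be the \emph{finest} henselian valuation on $K$. In that situation $Kv_K$ admits \emph{no} nontrivial henselian valuation whatsoever: if $w$ were one, the composition $w\circ v_K$ would be a henselian valuation strictly refining $v_K$, contradicting maximality of $v_K$ in the ordering by fineness. So the object you propose to ``extract'' --- a ($\emptyset$-definable) nontrivial henselian valuation $w$ on $Kv_K$ to compose with $v_K$ --- cannot exist, and the whole composition strategy is unavailable. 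The correct use of the failure of (2) is entirely different: following Prestel (\cite{Pr14}) and Jahnke--Koenigsmann, one runs a compactness/ultraproduct argument to show that if $Kv_K\not\preceq L$ for every henselian $L$ with divisible value group, then $\mathcal{O}_{v_K}$ \emph{itself} is uniformly definable (with at most one parameter); no refinement of $v_K$ is ever constructed. Relatedly, your opening claim that ``henselianity then follows automatically'' from comparability with $\mathcal{O}_{v_K}$ is false: coarsenings of a henselian valuation are henselian, but refinements need not be (and, when $H_2(K)=\emptyset$, proper refinements of $v_K$ are never henselian), so this cannot be used as a blanket justification.
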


\begin{lemma}\label{lem:eh.1}
If $K\in\mathcal{K}_{0,0}$ is elementarily henselian then $K$ admits a nontrivial henselian valuation which is definable using at most $1$ parameter.
In particular, for fields of equi\-characteristic zero, ${\bf(eh)}$ implies ${\bf(def)}$.
\end{lemma}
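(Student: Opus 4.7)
The plan is to apply \autoref{thm:JK.B} of Jahnke--Koenigsmann to the canonical henselian valuation $v_K$. Since $K$ is elementarily henselian it is in particular henselian, so $v_K$ is nontrivial; and as is standard for the canonical henselian valuation machinery, we tacitly assume $K$ is not separably closed. \autoref{thm:JK.B} then produces a henselian valuation on $K$ definable with at most one parameter, unless all three conditions (1)--(3) listed there hold for $(K,v_K)$; so the task reduces to showing that at least one of those three conditions must fail.

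First I would dispose of condition~(1). If $v_K\in H_2(K)$ then $Kv_K$ is separably closed, so $Kv_K = Kv_K^{\mathrm{sep}}$ and condition~(1) fails; \autoref{thm:JK.B} then yields the desired definable henselian valuation immediately. Hence I may assume $v_K\in H_1(K)$, so that $Kv_K$ is not separably closed and condition~(1) is genuinely satisfied.

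Now suppose for contradiction that conditions (2) and (3) also hold: $v_K K$ is divisible, and $Kv_K\preceq M$ in $\mathcal{L}_{\mathrm{ring}}$ for some henselian $M$ with $v_M M$ divisible. Because $K\in\mathcal{K}_{0,0}$ the valued field $(K,v_K)$ is equicharacteristic zero, so by Ax--Kochen--Ershov its $\mathcal{L}_{\mathrm{val}}$-theory is determined by those of $v_K K$ and $Kv_K$. Conditions (1)--(3) together are exactly the hypotheses under which a Prestel--Ziegler style construction (see \cite{Prestel-Ziegler78}) can be carried out inside the elementary class of $K$: one produces a field $L$ with $L \equiv_{\mathcal{L}_{\mathrm{ring}}} K$ which carries no nontrivial henselian valuation. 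This contradicts~({\bf eh}), so condition~(2) or~(3) must fail, and one further application of \autoref{thm:JK.B} completes the proof.

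The main obstacle is this last step: what is required is not merely the existence of some non-henselian $t$-henselian field, but the stronger fact that, under conditions (1)--(3), $K$ itself is $\mathcal{L}_{\mathrm{ring}}$-elementarily equivalent to a non-henselian field. The Prestel--Ziegler recipe produces such an $L$ precisely when the value group is divisible and the residue field is ``elementarily flexible'' in the sense of~(2), and the equicharacteristic zero hypothesis $K\in\mathcal{K}_{0,0}$ is what lets the argument be channelled through Ax--Kochen--Ershov so as to stay inside the $\mathcal{L}_{\mathrm{ring}}$-elementary class of $K$.
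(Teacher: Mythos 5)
Your reduction to \autoref{thm:JK.B} is the right opening move and matches the paper, and your disposal of the case $v_K\in H_2(K)$ (where condition (1) fails outright) is fine. But the heart of the lemma is precisely the step you yourself flag as ``the main obstacle,'' and there you have not given an argument: the assertion that conditions (1)--(3) are ``exactly the hypotheses under which a Prestel--Ziegler style construction can be carried out inside the elementary class of $K$'' is not something you can cite from \cite{Prestel-Ziegler78}, and it is not what happens. Prestel--Ziegler produce \emph{some} non-henselian $t$-henselian field; nothing in that construction yields a non-henselian field elementarily equivalent to a \emph{given} $K$ satisfying (1)--(3), which is what you need to contradict $(\textbf{eh})$.

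The missing idea is that no construction is needed: the witness is the residue field $Kv_K$ itself. Condition (3) says $v_KK$ is divisible, so the Ax--Kochen/Ershov principle in equicharacteristic zero gives $K\equiv Kv_K((\mathbb{Q}))$. Condition (2) supplies a henselian $L\succeq Kv_K$ with $v_LL$ divisible, and further applications of AKE give
$$K\equiv Kv_K((\mathbb{Q}))\equiv L((\mathbb{Q}))\equiv Lv_L((\mathbb{Q}))((\mathbb{Q}))\equiv L\equiv Kv_K.$$
Finally, condition (1) says $Kv_K$ is not separably closed, i.e.\ $v_K\notin H_2(K)$, hence $H_2(K)=\emptyset$ and $v_K$ is the \emph{finest} henselian valuation on $K$; therefore $Kv_K$ admits no nontrivial henselian valuation (such a valuation would compose with $v_K$ to give a strictly finer henselian valuation on $K$). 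So $K\equiv Kv_K$ with $Kv_K$ non-henselian, contradicting $(\textbf{eh})$. Your overall architecture is correct --- you even record the AKE ingredient without deploying it --- but without this argument (or a genuine substitute) the proof is incomplete at its decisive step.
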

\begin{proof}
We show the contrapositive.
Let $K\in\mathcal{K}_{0,0}$ and suppose that $K$ does not admit a nontrivial henselian valuation which is definable using at most $1$ parameter.
If $K$ is not henselian then we are done;
otherwise $K$ is henselian and we may apply \autoref{thm:JK.B}.
Therefore:
\begin{enumerate}
\item $Kv_{K}\neq Kv_{K}^{\mathrm{sep}}$, and
\item $Kv_{K}\preceq L$ for some henselian $L$ with $v_{L}L$ divisible, and
\item $v_{K}K$ is divisible.
\end{enumerate}
Both $(K,v_{K})$ and $(L,v_{L})$ are henselian valued fields with divisible value groups.
By applying the Ax--Kochen/Ersov principle (\cite[Theorem 4.6.4]{PD}) several times, we conclude:
\begin{align*}
K&\equiv Kv_{K}((\mathbb{Q}))\\
&\equiv L((\mathbb{Q}))\\
&\equiv Lv_{L}((\mathbb{Q}))((\mathbb{Q}))\\
&\equiv L.
\end{align*}
where $\equiv$ is always meant as elementary equivalence in $\mathcal{L}_\mathrm{ring}$.
Therefore $K\equiv L\equiv Kv_{K}$.
Finally, (1) implies that $v_{K}\notin H_{2}(K)$, so $Kv_{K}$ is not henselian.
Thus $K$ is not elementarily henselian.
\end{proof}

We now want to use \autoref{lem:eh.1} to show our missing arrow. The argument works via the Omitting Types Theorem.
Thus, we first start by giving names to the relevant (partial) types.
\begin{definition}
Let $\phi(x;y)$ be an $\mathcal{L}_{\mathrm{ring}}$-formula, where $x$ and $y$ are single variables, and let $n\in\mathbb{N}$.
Let $\delta_{\phi,n}({y})$ be the $\mathcal{L}_{\mathrm{ring}}$-formula that defines the set of elements ${b}$ such that $\phi(x;{b})$ defines a nontrivial 
$n_{\leq}$-henselian valuation ring. We let $D_{\phi}({y})$ denote the partial type
$$\{\delta_{\phi,n}({y})\;|\;n<\omega\}.$$
\end{definition}

Note that $D_{\phi}(y)$ is realised in $K$ if and only if there exists some $b\in K$ such that $\phi(K;b)$ is a nontrivial 
henselian valuation ring of $K$.

\begin{proposition}\label{prp:eh}
If $K\in\mathcal{K}_{0,0}$ is elementarily henselian then $K$ admits a nontrivial $\emptyset$-definable henselian valuation.
Equivalently, in equicharacteristic zero, we have $${\bf(eh)} \implies {(\emptyset\textrm{-{\bf def}})}.$$
\end{proposition}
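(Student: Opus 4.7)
The plan is to argue the contrapositive via the Omitting Types Theorem. Suppose $K\in\mathcal{K}_{0,0}$ is elementarily henselian yet admits no nontrivial $\emptyset$-definable henselian valuation; I treat the main case, namely that $K$ is non-separably closed. For each parameter-free formula $\psi(x)$, at least one sentence $\delta_{\psi,n}$ fails in $K$, and this failure transfers to every $L\equiv K$. Combined with \autoref{lem:eh.1} applied to any such $L$ (which is also elementarily henselian), every $L\equiv K$ must realise at least one of the $1$-parameter types $D_\phi(y)$. If I can show that each $D_\phi(y)$ is non-isolated over $\emptyset$ in $\mathrm{Th}(K)$, then the Omitting Types Theorem applied to the countable family $\{D_\phi\}_\phi$ produces a countable $L\equiv K$ omitting all the $D_\phi$, yielding a contradiction.

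To establish non-isolation, suppose toward contradiction that $\theta(y)$ isolates $D_\phi(y)$. Then for each $b\in\theta(K)$, $R_b:=\phi(K;b)$ is a nontrivial henselian valuation ring; since $K$ is henselian and non-separably closed, the family $\{R_b\}_{b\in\theta(K)}$ forms a chain under inclusion. I would consider the two $\emptyset$-definable sets
\[
\rho_\cup(x) \equiv \exists y(\theta(y)\wedge\phi(x;y)),\qquad \rho_\cap(x) \equiv \forall y(\theta(y)\to\phi(x;y)),
\]
picking out $\bigcup_b R_b$ and $\bigcap_b R_b$ respectively. Both are valuation rings, since the union and intersection of a chain of valuation rings on a field are valuation rings; and $\bigcup_b R_b$, as a coarsening of each $R_b$, is henselian. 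When $\bigcup_b R_b\neq K$ we immediately produce a nontrivial $\emptyset$-definable henselian valuation, the sought contradiction.

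The main obstacle is the residual case $\bigcup_b R_b = K$, which I would dispatch using the canonical henselian valuation $v_K$. If $H_2(K)=\emptyset$, then every henselian valuation on $K$ is a coarsening of $v_K$, so $\bigcap_b R_b\supseteq\mathcal{O}_{v_K}$ is also a coarsening of $v_K$, hence henselian. If $H_2(K)\neq\emptyset$, then $Kv_K$ is separably --- equivalently, algebraically --- closed, so every refinement of $v_K$ on $K$ is a composite $v_K\circ w$ with $w$ a valuation on the algebraically closed $Kv_K$, and hence henselian. Since each $R_b$, and therefore $\bigcap_b R_b$, is comparable with $v_K$, in either direction it is henselian; and it is proper in $K$, being contained in any $R_b\subsetneq K$. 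This yields the desired nontrivial $\emptyset$-definable henselian valuation, completing the contradiction.
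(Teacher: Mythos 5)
Your overall architecture is the same as the paper's: use \autoref{lem:eh.1} together with the Omitting Types Theorem to force some type $D_\phi(y)$ to be isolated, and then convert the resulting $\emptyset$-definable family $\{R_b\}_{b\in\theta(K)}$ of nontrivial henselian valuation rings into a single $\emptyset$-definable one. The first half of your argument --- if every $D_\phi$ were non-isolated one could omit them all in some $L\equiv K$, and \autoref{lem:eh.1} would then contradict elementary henselianity of $L$ --- is a correct (merely rearranged) version of the paper's proof.

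The gap is in the conversion step, specifically in the claim that ``since $K$ is henselian and non-separably closed, the family $\{R_b\}_{b\in\theta(K)}$ forms a chain under inclusion.'' This is false: on a non-separably closed henselian field all henselian valuations are \emph{dependent} (they induce the same topology) and each is comparable to $v_K$, but they need not be pairwise comparable. When $H_2(K)\neq\emptyset$, the proper refinements of $v_K$ correspond to arbitrary valuations on the separably closed residue field $Kv_K$, and these can be incomparable; for instance on $K=\overline{\mathbb{Q}}((t))$ the composites of $v_t$ with extensions of the $2$-adic and $3$-adic valuations of $\overline{\mathbb{Q}}$ are incomparable nontrivial henselian valuations. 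If the definable family $\{R_b\}$ consists of such incomparable refinements, then $\bigcup_b R_b$ need not even be closed under addition and $\bigcap_b R_b$ need not be a valuation ring, so neither $\rho_\cup$ nor $\rho_\cap$ produces what you need; in particular your residual case $H_2(K)\neq\emptyset$ is not dispatched by noting that refinements of $v_K$ are henselian, because comparability with $v_K$ of each $R_b$ does not make $\bigcap_b R_b$ a valuation ring. Your argument is sound exactly when $H_2(K)=\emptyset$, where all henselian valuations really are comparable (and your observation that $\bigcap_b R_b\supseteq\mathcal{O}_{v_K}$ is then a coarsening of $v_K$, hence henselian, is a clean way to finish that case). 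For $H_2(K)\neq\emptyset$ the paper does not attempt a union/intersection argument at all: it invokes Theorem A of \cite{Jahnke-Koenigsmann15}, which directly yields a nontrivial $\emptyset$-definable henselian valuation on any non-separably closed field with $H_2(K)\neq\emptyset$. You need that result, or some substitute for it, to close this case; the separably closed case you set aside is a separate (and much more minor) loose end that the paper's own proof also leaves implicit.
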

\begin{proof}
First we show that there is a single formula which defines (with parameters) a nontrivial henselian valuation ring in every $L\equiv K$.

Consider the following countable set of partial types (with respect to the theory of $K$):
$$\mathcal{D}:=\left\{D_{\phi}({y})\;|\;\phi\in\mathcal{L}_{\mathrm{ring}},\,D_{\phi}(y)\text{ is consistent with }\mathrm{Th}(K)\right\}.$$
We suppose, seeking a contradiction, that none of these types is principal.
By the Omitting Types Theorem (see \cite[Corollary 10.3]{TZ}), there exists
some $L\equiv K$ in which none of these types is realised.
That is: $L$ does not admit a nontrivial definable henselian valuation.
Now \autoref{lem:eh.1} implies that $L$ is not elementarily henselian, 
which contradicts our assumption that $K\equiv L$ is elementarily henselian.

Thus there exists an $\mathcal{L}_{\mathrm{ring}}$-formula $\phi(x;y)$ such that $D_{\phi}(y)$ is principal.
Let $\psi(y)$ be a formula which is consistent and isolates $D_{\phi}(y)$, i.e.
$$K\models\;\forall y\;\big(\psi(y)\longrightarrow\delta_{\phi,n}(y)\big),$$
for all $n<\omega$.
Then $\psi(y)$ defines a nonempty set of realisations of $D_{\phi}(y)$ in any $L\equiv K$.
Each element $a$ in this definable set, together with the formula $\phi(x,y)$, defines a nontrivial henselian valuation;
that is, we have a $\emptyset$-definable family of nontrivial henselian valuations.
It remains to show that we can $\emptyset$-define {\em one such}. 

If $H_{2}(K)\neq\emptyset$ then there exists a nontrivial $\emptyset$-definable henselian valuation, by 
\cite[Theorem A]{Jahnke-Koenigsmann15}.
On the other hand, if $H_{2}(K)=\emptyset$, then all henselian valuations on $K$ are comparable.
Let $\Phi(x)$ be the formula
$$\forall{y}\;\left(\psi({y})\longrightarrow\phi(x;{y})\right).$$
This formula $\emptyset$-defines the intersection of the $\emptyset$-definable family of nontrivial henselian valuation rings shown to exist above; and 
any such intersection is also a nontrivial henselian valuation ring.
\end{proof}

\subsection{The full picture in equicharacteristic zero}

We are now in a position to give the following:

\begin{proof}[Proof of part (A) of \autoref{thm:main}]
\label{proof:A}
Our aim is to establish that the complete picture of implications in the class $\mathcal{K}_{0,0}$ is given by the following diagram.
$$\xymatrix{
{\bf(\emptyset\textbf{\rm\bf-def})}\ar@{=>}[d]\ar@{<=>}[r] & {\bf(eh)}\ar@{=>}[d] \\
{\bf(def)}\ar@{=>}[r] & {\bf(h)}
}$$
The implication
${\bf(eh)}\Longrightarrow{\bf(\emptyset\textbf{\rm\bf-def})}$ was shown in \autoref{prp:eh}.
The other implications in the above diagram already hold in the class of all fields (see \autoref{fig:1}).
Finally \autoref{eg:1} and \autoref{eg:2} 
show that implications that are {\em not} contained in the above diagram do not hold in the class $\mathcal{K}_{0,0}$.
\end{proof}


\section{Fields of divisible-tame type}
\label{section:divisible-tame}

The aim of this section is to show the existence of non-henselian, t-henselian fields in any given characteristic,
which are of divisible-tame type (see \autoref{def:divisible.tame}).
Later, specifically in \autoref{lem:self.similar.eq.char}, we will rely on the existence of such fields.

\begin{definition}\label{def:tame}
A valued field $(K,v)$ of residue characteristic $p$ is {\em tame} if the residue field $Kv$ is perfect, the value group is $p$-divisible, and $(K,v)$ is defectless, i.e. the equation
$$[L:K]=(wL:vK)\cdot[Lw:Kv]$$
holds for every finite extension $(L,w)/(K,v)$.
\end{definition}

For more detail on tame valued fields we refer the reader to \cite{Kuhlmann}.

\begin{definition}\label{def:divisible.tame}
We say that a t-henselian field $k$ is of {\em divisible-tame type} if there exists some $K\equiv k$ and a nontrivial valuation $v$ on $K$ such that $(K,v)$ is tame and $vK$ is divisible.
\end{definition}

Our construction is a slight modification of that found in the recent paper \cite{Fehm-Jahnke15}.
Let $\mathbb{P}$ denote the set of prime numbers. The relevant statement from \cite{Fehm-Jahnke15} is the following.

\begin{lemma}[Lemma 6.4, \cite{Fehm-Jahnke15}]
Let $K_{0}$ be a field of characteristic zero that contains all roots of unity. Let $n\in\mathbb{N}$, $n<q\in\mathbb{P}$ and $P\subseteq\mathbb{P}$. Then there exists a valued field $(K_{1},v)$ with the following properties:
\begin{enumerate}
\item $K_{1}v=K_{0}$ and $vK_{1}=\mathbb{Z}[\frac{1}{p}:p\in\mathbb{P}\setminus P]$
\item $v$ is $n_{\leq}$-henselian but not $q$-henselian
\item $G_{K_{1}}=\langle H_{1},H_{2}\rangle$, where $H_{1}\cong\mathbb{Z}_{q}$ and there is $N\lhd H_{2}$ closed with $N\cong\prod_{p\in P}\mathbb{Z}_{p}$ and $H_{2}/N\cong G_{K_{0}}$.
\end{enumerate}
\end{lemma}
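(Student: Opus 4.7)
The strategy is to build $K_1$ in two stages. Stage 1 produces a genuinely henselian auxiliary field $F$ realising the prescribed residue field, value group, and ``base'' Galois group; stage 2 performs a Galois-theoretic surgery inside $F^{\mathrm{sep}}$ to add a free $\mathbb{Z}_q$-factor that destroys $q$-henselianity while preserving $n_\leq$-henselianity. The arithmetic inequality $n<q$ is the key input that cleanly separates these two levels of Hensel's lemma.

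For stage 1, let $P' := \mathbb{P}\setminus P$ and set $F := \bigcup_{m\in \langle P'\rangle} K_0((t^{1/m}))$, where $\langle P'\rangle$ denotes the multiplicative monoid generated by $P'$. Then $F$ is henselian with residue field $K_0$ and value group $\mathbb{Z}[\frac{1}{p}:p\in P']$. Since $K_0$ contains all roots of unity, Kummer theory identifies $G_F$ as a split extension
\[
1\longrightarrow \prod\nolimits_{p\in P}\mathbb{Z}_p\longrightarrow G_F \longrightarrow G_{K_0}\longrightarrow 1,
\]
with inertia factor generated by the $p^k$-th roots of $t$ for $p\in P$ (and trivial action of $G_{K_0}$, the roots of unity already being present). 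This produces the desired $H_2$ together with its normal inertia subgroup $N$.

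For stage 2, fix a cyclic $\mathbb{Z}_q$-Kummer tower $E/F$ inside $F^{\mathrm{sep}}$, for instance via $q^k$-th roots of a $q$-non-divisible element of $F^{\times}$, and construct $K_1$ as a subfield of $F^{\mathrm{sep}}$ whose absolute Galois group takes the form $\langle H_1, H_2\rangle$, where $H_1\cong\mathbb{Z}_q$ is generated by this tower and $H_2$ is a copy of $G_F$. Equip $K_1$ with the restriction of a henselian valuation from the algebraic closure: property (1) is inherited directly from $F$; the $n_\leq$-henselian half of (2) follows because any polynomial of degree $\leq n<q$ has splitting field of order coprime to $q$ and therefore lies in the $H_2$-fixed (henselian) part of $F^{\mathrm{sep}}$; the failure of $q$-henselianity is witnessed by the degree-$q$ extension generated by $H_1$.

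The principal obstacle is stage 2: showing that one can genuinely realise an absolute Galois group of the shape $\langle H_1, H_2\rangle$, with the stated $N\trianglelefteq H_2$, while still controlling the valuation. This typically requires a Haran--Jarden or Pop-style amalgamation theorem for absolute Galois groups of function fields (with $F(x)$ as the natural ambient), coupled with a dedicated group-theoretic decomposition lemma that splits off $H_1$ as a free $\mathbb{Z}_q$-factor without perturbing the internal structure of $H_2$. Once the Galois-theoretic shape and the ambient valuation have been secured, conditions (1)--(3) follow by routine verification of residue field, value group, and henselianity level.
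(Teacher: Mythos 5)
There is a genuine gap, and it sits exactly where you place the ``principal obstacle'': stage 2 is not a routine surgery plus an amalgamation theorem, it is the entire proof, and as sketched it cannot be carried out. Your stage-1 field $F=\bigcup_{m}K_{0}((t^{1/m}))$ is \emph{henselian}; if $K_{1}$ is a subfield of $F^{\mathrm{sep}}$ containing $F$ then $G_{K_{1}}\leq G_{F}$, so it cannot have the form $\langle H_{1},H_{2}\rangle$ with $H_{2}\cong G_{F}$ and an additional $\mathbb{Z}_{q}$-generator $H_{1}$; and if $K_{1}$ does not contain $F$ you have no Galois-theoretic control over it whatsoever. The appeal to a Haran--Jarden or Pop-style amalgamation over $F(x)$ does not repair this: no such theorem is invoked in the actual argument, and even granting one you would still have to equip the amalgamated field with a valuation having the prescribed residue field and value group, which is not addressed. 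Also, the degree-$q$ extension ``generated by $H_{1}$'' does not by itself witness failure of $q$-henselianity; one needs a degree-$q$ extension sitting \emph{inside the henselisation}, i.e.\ a Hensel configuration of degree $q$ with no root in $K_{1}$.

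The actual construction (from \cite{Fehm-Jahnke15}, reproduced in modified form in the paper's Lemma on $n_{\leq}$-henselian, non-$q$-henselian fields) avoids all of this by starting from a \emph{non-henselian} base. One works inside $K_{0}((x^{\Gamma}))$ with $\Gamma=\mathbb{Z}[\frac{1}{\ell}:\ell\in\mathbb{P}\setminus P]$ and takes $F:=K_{0}(x^{\Gamma})$, which is not henselian but whose relative algebraic closure $F^{\mathrm{ra}}$ in $K_{0}((x^{\Gamma}))$ is henselian with residue field $K_{0}$ and value group $\Gamma$; this realises your $H_{2}$ together with $N\cong\prod_{p\in P}\mathbb{Z}_{p}$. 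The polynomial $f=X^{q}-(x+1)$ has, by Hensel's Lemma, a root $\alpha$ in the henselisation generating a $C_{q}$-extension $F(\alpha)/F$ (here one uses $\zeta_{q}\in K_{0}$ and the separability of $X^{q}-1$ over the residue field). Lifting a generator of $\mathrm{Gal}(F(\alpha)/F)$ to an element $\sigma$ of a $q$-Sylow subgroup of $G_{F}$ gives $G_{q}:=\overline{\langle\sigma\rangle}\cong\mathbb{Z}_{q}$ with fixed field $E$, and one sets $K_{1}:=E\cap F^{\mathrm{ra}}$. The Galois correspondence then yields $G_{K_{1}}=\langle G_{E},G_{F^{\mathrm{ra}}}\rangle$ for free --- no amalgamation is needed, because everything happens among algebraic \emph{extensions} of the single non-henselian field $F$. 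The inequality $n<q$ enters as in your sketch but localised correctly: a Hensel root $\beta$ of a monic polynomial of degree $\leq n$ lies in $K_{1}^{h}\subseteq F^{\mathrm{ra}}$ and satisfies $[E(\beta):E]\leq n<q$, which forces $\beta\in E$ since $\mathbb{Z}_{q}$ has no proper open subgroups of index $<q$, hence $\beta\in E\cap F^{\mathrm{ra}}=K_{1}$; while $\alpha\in K_{1}^{h}\setminus E$ witnesses that $v$ is not $q$-henselian.
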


See \cite[Definition 6.1]{Fehm-Jahnke15} for the definition of $n_{\leq}$-henselian.

We need to rewrite this lemma and add to its proof in order to apply it to certain fields of positive characteristic.
We say that a field is {\em $p$-closed} if it does not admit any separable extensions of degree $p$.

\begin{lemma}\label{lem:n.hens.not.q.hens}
Let $p\in\mathbb{P}\cup\{1\}$ and let $K$ be a perfect and $p$-closed field of characteristic exponent $p$ that contains all roots of unity.
Let $n\in\mathbb{N}$, let $q\in\mathbb{P}$, and let $P\subseteq\mathbb{P}$ be such that $p\notin P$ and $n<q$.
Then there exists a valued field $(K',v)$ such that
\begin{enumerate}
\item $K'v=K$,
\item $vK'=\mathbb{Z}\big[\frac{1}{l}\;:\;l\in\mathbb{P}\setminus P\big]$,
\item $(K',v)$ is $n_{\leq}$-henselian,
\item $(K',v)$ is not $q$-henselian,
\item $K^{'}$ is perfect and $p$-closed, and
\item $(K^{'h},v)$ is tame.
\end{enumerate}
\end{lemma}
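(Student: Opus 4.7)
The plan is to adapt the proof of \cite[Lemma 6.4]{Fehm-Jahnke15}, which handles residue fields of characteristic zero, to the positive-characteristic-residue-field setting, and then to verify the additional tameness assertion (6).

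First, I would fix $\Gamma:=\mathbb{Z}\big[\frac{1}{l}:l\in\mathbb{P}\setminus P\big]$. Since $p\notin P$, the group $\Gamma$ is $p$-divisible. The ambient object for the construction is the Hahn series field $K((\Gamma))$ with its canonical valuation, which is maximally complete — hence henselian and defectless — with residue field $K$ (perfect by assumption) and value group $\Gamma$, and is therefore tame in the sense of \autoref{def:tame}.

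Second, I would repeat the Fehm--Jahnke construction inside (an algebraic extension of) $K((\Gamma))$ to produce $K'$ with the prescribed absolute Galois group and thereby realise (1)--(4). The original construction uses roots of unity in the base to set up Kummer theory for primes $l$, together with a careful specification of which Galois representations are admissible so as to force exactly $n_{\leq}$-henselianity but not $q$-henselianity. Both ingredients remain available to us: all roots of unity lie in $K$ by hypothesis, and the only prime at which Kummer theory is unavailable is $l=p$; but since $K$ is assumed $p$-closed and $p\notin P$, no separable degree-$p$ extensions enter at any stage, so (5) is preserved. Perfectness of $K'$ can additionally be secured by replacing $K'$ with its perfect hull inside $K((\Gamma))$, which leaves residue field and value group unchanged and preserves the henselianity conditions.

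Third, tameness of $(K'^{h},v)$ would be deduced as follows. Since henselisation preserves the residue field and value group, $K'^{h}v=K$ is perfect and $vK'^{h}=\Gamma$ is $p$-divisible. For defectlessness I would invoke Kuhlmann's characterisation \cite{Kuhlmann}: a henselian valued field of residue characteristic $p$ with perfect residue field and $p$-divisible value group is tame if and only if it admits no proper immediate algebraic extension. By arranging the construction so that $K'^{h}$ lies in $K((\Gamma))$ with matching residue field and value group, any proper immediate algebraic extension of $K'^{h}$ would produce one of $K((\Gamma))$, contradicting the maximal completeness of the Hahn field.

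The main obstacle I anticipate is the final defectlessness verification. In positive residue characteristic, Artin--Schreier defect can sneak in through the interaction between the value group and $p$-th power residues, and one must verify carefully that the Fehm--Jahnke style construction, performed inside $K((\Gamma))$, neither breaks the ambient tameness nor introduces an immediate algebraic extension into the henselisation. The hypotheses that $K$ is perfect and $p$-closed and that $p\notin P$ together rule out the standard sources of such defect, but this has to be tracked through each Galois-theoretic step of the construction rather than appealed to in a single stroke.
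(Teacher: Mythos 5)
Your setup and your treatment of items (1)--(5) follow the same route as the paper: work inside the Hahn field $K((x^{\Gamma}))$, observe that it is tame, and import the Galois-theoretic construction of \cite[Lemma 6.4]{Fehm-Jahnke15} (the paper takes $K'=E\cap F^{\mathrm{ra}}$, where $F=K(x^{\Gamma})$, $F^{\mathrm{ra}}$ is the relative algebraic closure of $F$ in $K((x^{\Gamma}))$, and $E$ is the fixed field of a procyclic subgroup $G_{q}\cong\mathbb{Z}_{q}$ of $G_{F}$). Your perfect-hull step is unnecessary, since $p\notin P$ gives $\frac{1}{p}\in\Gamma$, so $F$ and all its algebraic extensions are already perfect; but it is harmless.

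The genuine gap is in your argument for (6). You propose to rule out proper immediate algebraic extensions of $(K'^{h},v)$ by saying that such an extension ``would produce one of $K((\Gamma))$'', contradicting maximality. It would not: maximality of the ambient field does not descend to valued subfields, even to henselian ones with the same residue field and value group. The classical counterexample is the perfect hull $L$ of $\mathbb{F}_{p}((t))$ inside the maximal field $\mathbb{F}_{p}((t^{\mathbb{Z}[1/p]}))$: both have value group $\mathbb{Z}[1/p]$ and residue field $\mathbb{F}_{p}$, $L$ is henselian, yet the root $y=\sum_{i\geq1}t^{-1/p^{i}}$ of $Y^{p}-Y-t^{-1}$ lies in the Hahn field but not in $L$ and generates a proper immediate defect extension of $L$. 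An immediate extension of the subfield simply embeds as a larger subfield of the Hahn field and contradicts nothing. The paper closes this step differently: it applies \cite[Lemma 3.7]{Kuhlmann} to the \emph{relatively algebraically closed} subfield $F^{\mathrm{ra}}\subseteq K((x^{\Gamma}))$ (with trivial residue extension) to get that $(F^{\mathrm{ra}},v_{x})$ is tame; then tameness together with the $p$-divisibility of $\Gamma$ and the $p$-closedness of $K$ forces $p$ not to divide the supernatural order of $G_{F^{\mathrm{ra}}}$, and since $G_{E}\cong\mathbb{Z}_{q}$ with $q\neq p$ and $G_{K'}=\langle G_{E},G_{F^{\mathrm{ra}}}\rangle$, one gets $p\nmid\#G_{K'}$; hence every finite extension of $K'^{h}$ is prime to $p$ and $(K'^{h},v)$ is tame. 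Note that $K'^{h}$ itself is not relatively algebraically closed in $K((x^{\Gamma}))$, so some such detour through the Galois group is genuinely needed; your closing paragraph correctly flags defect as the main obstacle, but the mechanism you offer does not overcome it.
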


\begin{proof}
Let $\Gamma:=
\mathbb{Z}\big[\frac{1}{l}\;:\;l\in\mathbb{P}\setminus P\big]$.
We work inside the field $K((x^{\Gamma}))$ of generalized power series, 
together with the $x$-adic valuation which we denote by $v_{x}$.
In fact, $v_{x}$ will also denote the restriction of the $x$-adic valuation to any subfield of $K((x^{\Gamma}))$.
Let $F_{0}:=K(x)$ and let $F:=K(x^{\Gamma})$.

Since $(K((x^{\Gamma})),v_{x})$ is maximal,
$\Gamma$ is $p$-divisible, 
and $K$ is perfect;
$(K((x^{\Gamma})),v_{x})$ is tame.
Let $F^{\mathrm{ra}}:=F^{\mathrm{alg}}\cap K((x^{\Gamma}))$ denote the relative algebraic closure of $F$ in $K((x^{\Gamma}))$
and consider the extension
$$(F^{\mathrm{ra}},v_{x})\subseteq(K((x^{\Gamma})),v_{x}).$$
The residue field extension is trivial, since both residue fields are equal to $K$.
In particular the extension of residue fields is algebraic.
Thus we may apply \cite[Lemma 3.7]{Kuhlmann} to find that $(F^{\mathrm{ra}},v_{x})$ is tame.

Just as in the proof of \cite[Lemma 6.4]{Fehm-Jahnke15}, there is a procyclic subgroup $G_{q}\leq G_{F}$, $G_{q}\cong\mathbb{Z}_{q}$. Let $K'$ be the intersection $E\cap F^{\mathrm{ra}}$ where $E$ is the fixed field of $G_{q}$.
Note that $K'v_{x}=K$ and $vK'=\Gamma$.
The claims that $(K',v)$ is $n_{\leq}$-henselian and not $q$-henselian can be read verbatim from the proof of \cite[Lemma 6.4]{Fehm-Jahnke15}.

Note that $F$ is perfect, and so are all algebraic extensions of $F$. Above we showed that $F^{\mathrm{ra}}$ is tame and we assumed that $vF^{\mathrm{ra}}=\Gamma$ is $p$-divisible and that $F^{\mathrm{ra}}v=K$ is $p$-closed; this shows that $p$ does not divide the order of $G_{F^{\mathrm{ra}}}$ (in the supernatural sense).
Since $p\neq q$, neither does $p$ divide the order of $G_{E}\cong\mathbb{Z}_{q}$.
Since
$G_{K'}=\langle G_{E},G_{F^{\mathrm{ra}}}\rangle$,
then $p$ does not divide the order of $G_{K'}$.
Consequently both $K'$ and $K^{'h}$ are $p$-closed; and $(K^{'h},v)$ is tame, as required.
\qedhere
\end{proof}

\begin{lemma}\label{lem:projective.limit}
Let $K$ be a perfect field equipped with a family of equicharacteristic valuations $(v_{n})_{n<\omega}$ such that the corresponding valuation rings $(\mathcal{O}_{n})_{n<\omega}$ form an increasing chain.
Suppose that
\begin{enumerate}
\item $K=\bigcup_{n<\omega}\mathcal{O}_{n}$,
\item $v_{0}K$ is divisible, and
\item $(Kv_{n},\overline{v_{0}})$ is defectless, for each $n<\omega$;
\end{enumerate}
where $\overline{v_{0}}$ denotes the valuation induced on $Kv_{n}$ by $v_{0}$ for $n<\omega$.
Then $(K,v_{0})$ is defectless.
\end{lemma}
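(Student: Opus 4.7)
The goal is to show that every finite extension $(L,w)/(K,v_{0})$ satisfies $[L:K] = (wL:v_{0}K)\cdot[Lw:Kv_{0}]$. My strategy is to decompose $v_{0}$ through each coarsening $v_{n}$, apply hypothesis (3) to control the defect on the residue side, and use the exhaustion $K=\bigcup\mathcal{O}_{n}$ combined with the divisibility of $v_{0}K$ to control the coarse side.

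For each $n<\omega$, write $v_{0} = v_{n}\circ\bar{v}_{0}$, let $w_{n}$ be the coarsening of $w$ on $L$ extending $v_{n}$, and let $\bar{w}_{0}$ denote the valuation induced on $Lw_{n}$ extending $\bar{v}_{0}$ on $Kv_{n}$. By the multiplicativity of the defect under composition of valuations,
$$d(w|v_{0}) = d(w_{n}|v_{n})\cdot d(\bar{w}_{0}|\bar{v}_{0}).$$
Hypothesis (3) says $(Kv_{n},\bar{v}_{0})$ is defectless, so the finite extension $(Lw_{n},\bar{w}_{0})/(Kv_{n},\bar{v}_{0})$ has $d(\bar{w}_{0}|\bar{v}_{0})=1$; hence $d(w|v_{0}) = d(w_{n}|v_{n})$ for every $n<\omega$. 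Meanwhile, divisibility of $v_{0}K$ forces each convex subgroup $\Delta_{n}\subseteq v_{0}K$ associated to $v_{n}$ to be divisible (convex subgroups of divisible ordered abelian groups are divisible), and hypothesis (1) translates into $v_{0}K = \bigcup_{n}\Delta_{n}$. Writing $\Delta_{n}^{L}$ for the convex hull of $\Delta_{n}$ in $wL$, since $wL/v_{0}K$ is finite and every element of $wL$ has a positive integer multiple in $v_{0}K$, one obtains $\bigcup_{n}\Delta_{n}^{L} = wL$. Thus for $n$ sufficiently large all coset representatives of $v_{0}K$ in $wL$ lie in $\Delta_{n}^{L}$, giving $wL = v_{0}K+\Delta_{n}^{L}$ and therefore $e(w_{n}|v_{n}) = 1$.

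It remains to show that $d(w_{n}|v_{n}) = 1$ for some such $n$; since $e(w_{n}|v_{n})=1$ this amounts to proving $f(w_{n}|v_{n}) = [L:K]$. By multiplicativity of the defect (applied in the other direction), it suffices to exhibit some $n$ for which $(K,v_{n})$ is itself defectless, since then $(K,v_{0}) = (K,v_{n}\circ\bar{v}_{0})$ inherits defectlessness via hypothesis (3). Here one invokes that, by Ostrowski's lemma, any defect is a power of the residue characteristic $p$, and in equicharacteristic $p>0$ any defect extension is assembled from immediate Artin--Schreier-type extensions. The chain structure $(v_{n})_{n<\omega}$, combined with divisibility of $v_{0}K$ and the defectlessness of each $(Kv_{n},\bar{v}_{0})$, ensures that no such immediate extension can exist: any putative immediate Artin--Schreier generator $\alpha^{p}-\alpha=a$ over $(K,v_{n})$, after repeated substitution $\alpha\mapsto\alpha-c$ for $c\in\mathcal{O}_{m}$ ($m\geq n$), would descend to a nontrivial immediate algebraic extension of some $(Kv_{m},\bar{v}_{0})$, contradicting (3). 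This transfer from the defect at the $v_{n}$-level to a contradiction at a residue level is the main technical obstacle of the lemma.
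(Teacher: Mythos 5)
Your opening reductions are fine (killing the residue-side defect with hypothesis (3), and noting that divisibility of $v_{0}K$ forces the value-group extension to be trivial -- in fact $wL=v_{0}K$ outright, since a divisible group admits no proper finite-index subextension inside its divisible hull). But the proof then collapses exactly where the lemma's real content lies. You reduce everything to the claim that $(K,v_{n})$ is defectless for some $n$, and your justification for that claim is the sentence asserting that an immediate Artin--Schreier extension of $(K,v_{n})$ ``would descend to a nontrivial immediate algebraic extension of some $(Kv_{m},\overline{v_{0}})$.'' This is not argued, and it is not clear it can be: an immediate extension of $(K,v_{n})$ need not induce a nontrivial immediate extension of any residue field $(Kv_{m},\overline{v_{0}})$ with $m\geq n$ -- the induced extension of $(Kv_{m},\overline{v_{m,n}})$ can perfectly well be trivial, with the whole defect ``living'' in the coarse direction and escaping every $\mathcal{O}_{m}$. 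Ruling that out is precisely what requires the hypothesis $K=\bigcup_{n}\mathcal{O}_{n}$ together with perfectness, and you do not use either in this step. Your own closing remark, calling this transfer ``the main technical obstacle of the lemma,'' in effect concedes that the obstacle has not been overcome. Note also that the intermediate claim you aim for (defectlessness of the coarsening $(K,v_{n})$) is essentially equivalent in strength to the lemma itself, so nothing has been gained by the reduction.

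For comparison, the paper avoids proving anything about $(K,v_{n})$ globally. It passes to the henselisation $(K^{h},v_{0})$, observes that $K^{h}=\bigcup_{n}\mathcal{O}_{n}$ still holds (no nontrivial common coarsening survives an algebraic extension), and reduces, via Kuhlmann's characterisation for perfect equicharacteristic fields, to showing that $(K^{h},v_{0})$ has no proper finite immediate extension. Given such an extension $K^{h}(\alpha)$ with separable minimal polynomial $f$, one chooses $n$ so large that $Df(\alpha)\notin\mathfrak{m}_{w_{n}}$ (possible because $\bigcap_{n}\mathfrak{m}_{w_{n}}=\{0\}$), checks using (2) and (3) that the residue extension at level $n$ is trivial, so that $\alpha w_{n}$ is a simple root of $fv_{n}$ in $K^{h}v_{n}$, and then lifts it by Hensel's lemma for the (automatically henselian) coarsening $v_{n}$. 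It is this ``choose $n$ large enough that the root becomes visible'' step that does the work your descent sentence is trying to shortcut; without some version of it, your argument has a genuine gap.
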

\begin{proof}
First note that all value groups that appear in this proof are divisible, since $v_{0}K$ is divisible; and all fields are perfect, since $K$ is perfect.

Let $(K^{h},v_{0})$ denote the henselisation of $(K,v_{0})$.
As a small abuse of notation,
let $v_{n}$ (respectively, $\mathcal{O}_{n}$) also denote the
unique extension to $K^{h}$ of the valuation (resp., valuation ring) of the same name, for each $n<\omega$.
Since $K^{h}/K$ is algebraic, there is no nontrivial valuation on $K^{h}$ which is coarser than all of the valuations $v_{n}$.
Therefore $K^{h}=\bigcup_{n<\omega}\mathcal{O}_{n}$.

We will show that $(K^{h},v_{0})$ is tame.
Since it is a perfect equicharacteristic valued field,
it suffices to show that $(K^{h},v_{0})$ is algebraically maximal, by \cite[Corollary 3.4a]{Kuhlmann}.
Let $(L,w_{0})/(K^{h},v_{0})$ be a finite immediate extension,
i.e. $w_{0}L=v_{0}K^{h}$ and $Lw_{0}=K^{h}v_{0}$.
Our aim is to show that this extension is trivial.
Let $w_{n}$ denote the unique extension to $L$ of the valuation $v_{n}$ and let $\mathcal{O}_{w_{n}}$ (respectively, $\mathfrak{m}_{w_{n}}$) be the valuation ring (resp., maximal ideal) of $w_{n}$.
Just as argued above for $K^{h}$, we note that
$L=\bigcup_{n<\omega}\mathcal{O}_{w_{n}}$,
and equivalently
$\{0\}=\bigcap_{n<\omega}\mathfrak{m}_{w_{n}}$.

We may assume that $L=K^{h}(\alpha)$,
for some $\alpha\in\mathcal{O}_{w_{0}}$.
Let $f\in\mathcal{O}_{0}[x]$ be the minimal polynomial of $\alpha$ over $K^{h}$
(note that $\mathcal{O}_{w_{0}}$ is integral over $\mathcal{O}_{0}$).
Since $K^{h}$ is perfect, $f$ is separable.
Thus $f(\alpha)=0$ and $Df(\alpha)\neq0$, where $Df$ denotes the formal derivative of $f$.
We choose $n<\omega$ such that
$Df(\alpha)\notin\mathfrak{m}_{w_{n}}$.
Writing this in another way, we have
$$(Dfv_{n})(\alpha w_{n})=Df(\alpha)w_{n}\neq0.$$
Trivially we have $(fv_{n})(\alpha w_{n})=f(\alpha)w_{n}=0$.
Thus $\alpha w_{n}\in Lw_{n}$ is a simple root of $fv_{n}$.

Since $(K^{h}v_{n},\overline{v_{0}})$ is defectless,
$(Lw_{n},\overline{w_{0}})/(K^{h}v_{n},\overline{v_{0}})$ is a defectless extension.
Both value groups $\overline{w_{0}}(Lw_{n})$ and $\overline{v_{0}}(K^{h}v_{n})$ are convex subgroups of divisible groups; thus they are divisible.
Since the extension is finite, the extension of value groups is trivial,
Therefore
$$[Lw_{n}:K^{h}v_{n}]=[(Lw_{n})\overline{w_{0}}:(K^{h}v_{n})\overline{v_{0}}]=[Lw_{0}:K^{h}v_{0}].$$
Since we assumed that $(L,w_{0})/(K^{h},v_{0})$ is immediate, $Lw_{0}=K^{h}v_{0}$.
Therefore $Lw_{n}=K^{h}v_{n}$.

Putting all of this together, $\alpha w_{n}\in K^{h}v_{n}$ is a simple root of $fv_{n}$.
Since $(K^{h},v_{n})$ is henselian,
there exists $a\in\mathcal{O}_{n}\subseteq K^{h}$ such that $av_{n}=\alpha w_{n}$ and $f(a)=0$.
Thus $L=K^{h}$.
This shows that $(K^{h},v_{0})$ does not have any proper immediate algebraic extensions, as required.
\end{proof}

\begin{proposition}\label{prp:near.hens.divisible.tame}
Let $p$ be a prime or zero. There exists a non-henselian t-henselian field of characteristic $p$ of divisible-tame type.
\end{proposition}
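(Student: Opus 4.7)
My plan is to iterate Lemma \ref{lem:n.hens.not.q.hens} to produce, for each $n$, a valued field $(K_n,v_n)$ which is $n_\leq$-henselian but not $q_n$-henselian for a sequence of primes $q_n\to\infty$, and then combine these into a single field $K$ on which each $v_n$ survives as a valuation with the same properties. Lemma \ref{lem:projective.limit} will be used to secure tameness of the limit henselization, and the Prestel--Ziegler characterization of $t$-henselianity will give the desired elementary equivalence to a henselian field.

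Concretely, first fix a perfect, $p$-closed field $k$ of characteristic exponent $p$ containing all roots of unity (e.g.\ $\overline{\mathbb{F}_p}$ if $p$ is a prime, $\overline{\mathbb{Q}}$ if $p=0$). Choose a sequence of primes $(q_n)$ with $q_n>n$ and $q_n\neq p$, and a decreasing sequence $(P_n)$ of subsets of $\mathbb{P}$ with $q_n\in P_n$, $p\notin P_n$, and $\bigcap_n P_n=\emptyset$. Apply Lemma \ref{lem:n.hens.not.q.hens} with parameters $(k,n,q_n,P_n)$ for each $n$ to obtain valued fields $(K_n,v_n)$ of residue field $k$, value group $\Gamma_n=\mathbb{Z}[1/l:l\in\mathbb{P}\setminus P_n]$, with tame henselization, which are $n_\leq$-henselian but not $q_n$-henselian. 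Organise these constructions (e.g.\ inside a common iterated power series field over $k$) so that the $K_n$ sit as subfields of a single field $K$, each carrying its own induced valuation $v_n^*$. Applying Lemma \ref{lem:projective.limit} to the chain of coarsenings of the limit valuation $v$ on $K$ yields that $(K^h,v)$ is tame; the value group is divisible because $\bigcap_n P_n=\emptyset$ forces every prime to eventually lie in $\mathbb{P}\setminus P_n$. An Ax--Kochen/Ersov-type argument (as in the proof of Lemma \ref{lem:eh.1}) then produces a henselian tame $L\equiv K$ with divisible value group, witnessing that $K$ is of divisible-tame type, and in particular $t$-henselian.

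The main obstacle is verifying that $K$ itself is not henselian. Since $k$ is not separably closed, neither is $K$, so any two henselian valuations on $K$ would be comparable. One shows that by construction every valuation on $K$ comparable with some $v_n^*$ inherits the failure of $q_n$-henselianity from $(K_n,v_n)$; using a Galois-theoretic analysis of $K$ --- tracking how the absolute Galois group of $K$ is generated by the procyclic subgroups appearing in the proof of Lemma \ref{lem:n.hens.not.q.hens} at each stage --- one rules out the possibility of any other henselian valuation. This Galois-theoretic step parallels the original Prestel--Ziegler construction of non-henselian $t$-henselian fields \cite{Prestel-Ziegler78} and is the most technically delicate part of the argument.
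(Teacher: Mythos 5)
Your overall strategy (iterate \autoref{lem:n.hens.not.q.hens}, pass to a limit, invoke \autoref{lem:projective.limit}, and finish with a compactness/elementary-equivalence argument) is the right one, but the crucial amalgamation step is missing and, as set up, cannot work. You apply the lemma to the \emph{same} base field $k$ every time, so each $K_n$ has residue field $k$, and then ask to ``organise these constructions\dots so that the $K_n$ sit as subfields of a single field $K$, each carrying its own induced valuation.'' No such amalgamation is provided, and there is no general reason one exists with the henselianity properties preserved. The paper instead builds a \emph{tower}: $K_n$ is obtained by applying the lemma to $K_{n-1}$, so $K_nv_n=K_{n-1}$ and the places compose; $K$ is then the fraction field of the \emph{projective limit} of the valuation rings $\mathcal{O}_{n,0}$ of the composites $v_{n,0}=v_n\circ\cdots\circ v_1$ (using \cite{Fehm-Paran11} for the fact that this limit is a valuation ring). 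Only this tower produces the increasing chain $\mathcal{O}_0^*\subseteq\mathcal{O}_1^*\subseteq\cdots$ with $\bigcup_n\mathcal{O}_n^*=K$ that both \autoref{lem:projective.limit} and the non-henselianity argument need. Relatedly, your ``limit valuation $v$'' is ill-defined: the common coarsening of the $v_n^*$ is the \emph{trivial} valuation; the valuation to which \autoref{lem:projective.limit} is applied is the \emph{finest} one, $v_0^*$. Your choice of nonempty $P_n$ also breaks hypothesis (2) of that lemma: $v_0^*K$ is built from the groups $\Gamma_n=\mathbb{Z}[1/l: l\in\mathbb{P}\setminus P_n]$, so it is divisible only if each $\Gamma_n$ is; the paper takes $P=\emptyset$ throughout precisely so that every $\Gamma_n=\mathbb{Q}$.

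Two further steps fail as written. First, you cannot obtain a henselian $L\equiv K$ by an Ax--Kochen/Ershov argument ``as in \autoref{lem:eh.1}'': AKE applies to tame (in particular henselian) valued fields, and $K$ is by design not henselian, so this is circular. The correct move is a nonprincipal ultraproduct of the structures $(K,v_n^*)$; since $v_n^*$ is $n_{\leq}$-henselian, the induced valuation on the ultrapower $K^*\equiv K$ is nontrivial, henselian, tame, with divisible value group --- this single step witnesses both $t$-henselianity and divisible-tame type. Second, your non-henselianity sketch asserts that every valuation comparable with some $v_n^*$ ``inherits the failure of $q_n$-henselianity''; this is false for \emph{coarsenings} (coarsenings of henselian valuations are henselian, not conversely --- indeed the $v_n^*$ become increasingly henselian as they coarsen). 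The actual argument: a nontrivial henselian $u$ is comparable to each $v_n^*$, cannot coarsen all of them since $\bigcup_n\mathcal{O}_n^*=K$, hence refines some $v_n^*$; then $v_n^*$ is henselian, hence so is $(K_{n+1},v_{n+1})$, contradicting its failure of $q_{n+1}$-henselianity. The comparability step is where non-separable-closedness of the residue fields enters, which is why the base field must be perfect, $p$-closed and \emph{not} separably closed; your proposed bases $\overline{\mathbb{F}_p}$ and $\overline{\mathbb{Q}}$ are algebraically closed and must be replaced.
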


This proposition is our version of \cite[Construction 6.5]{Fehm-Jahnke15}, which uses our \autoref{lem:n.hens.not.q.hens} instead of \cite[Lemma 6.4]{Fehm-Jahnke15}.
As such, our proof is very similar to that of \cite[Construction 6.5]{Fehm-Jahnke15}.
Nevertheless, we go into some detail in order to be able to highlight the points of difference.

\begin{proof}
Let $K_{0}$ be any field of characteristic $p$ which is perfect and $p$-closed but not separably closed.
For each $n<\omega$ with $n>p$, we choose a prime $q_{n}$ which is greater than $n$.
We apply \autoref{lem:n.hens.not.q.hens} (always with $P=\emptyset$) to obtain a valued field $(K_{1},v_{1})$ which is $n_{\leq}$-henselian,
not $q_{n}$-henselian,
and defectless.
Also $K_{1}$ is of characteristic $p$ and is perfect and $p$-closed.
Finally, $K_{1}v_{1}=K_{0}$, and the value group $v_{1}K_{1}=\mathbb{Q}$ is divisible
(because $P=\emptyset$).

We continue to apply \autoref{lem:n.hens.not.q.hens} recursively.
In this way we obtain a sequence $(K_{n},v_{n})_{n<\omega}$ of valued fields
with the corresponding places forming a chain:
$$\ldots\dashrightarrow K_{n}\overset{v_{n}}{\dashrightarrow }K_{n-1}\dashrightarrow\ldots\overset{v_{1}}{\dashrightarrow} K_{0}.$$
For $n\geq m$, there is the composition
$v_{n,m}:=v_{n}\circ\ldots\circ v_{m+1}$.
This is a valuation on $K_{n}$ with
residue field
$K_{n}v_{n,m}=K_{m}$
and value group
$v_{n,m}K_{n}\equiv\mathbb{Q}$.
For $n\geq m$, we let $\mathcal{O}_{n,m}$ denote the valuation ring corresponding to $v_{n,m}$.
The residue map
$\mathcal{O}_{n,m}\longrightarrow K_{m}$
restricts to a ring epimorphism
$\pi_{n,m}:\mathcal{O}_{n,0}\longrightarrow\mathcal{O}_{m,0}$.
Then the rings
$(\mathcal{O}_{n,0})_{n<\omega}$
together with the maps
$(\pi_{n,m})_{m\leq n}$
form a projective system.

\begin{figure}[h]
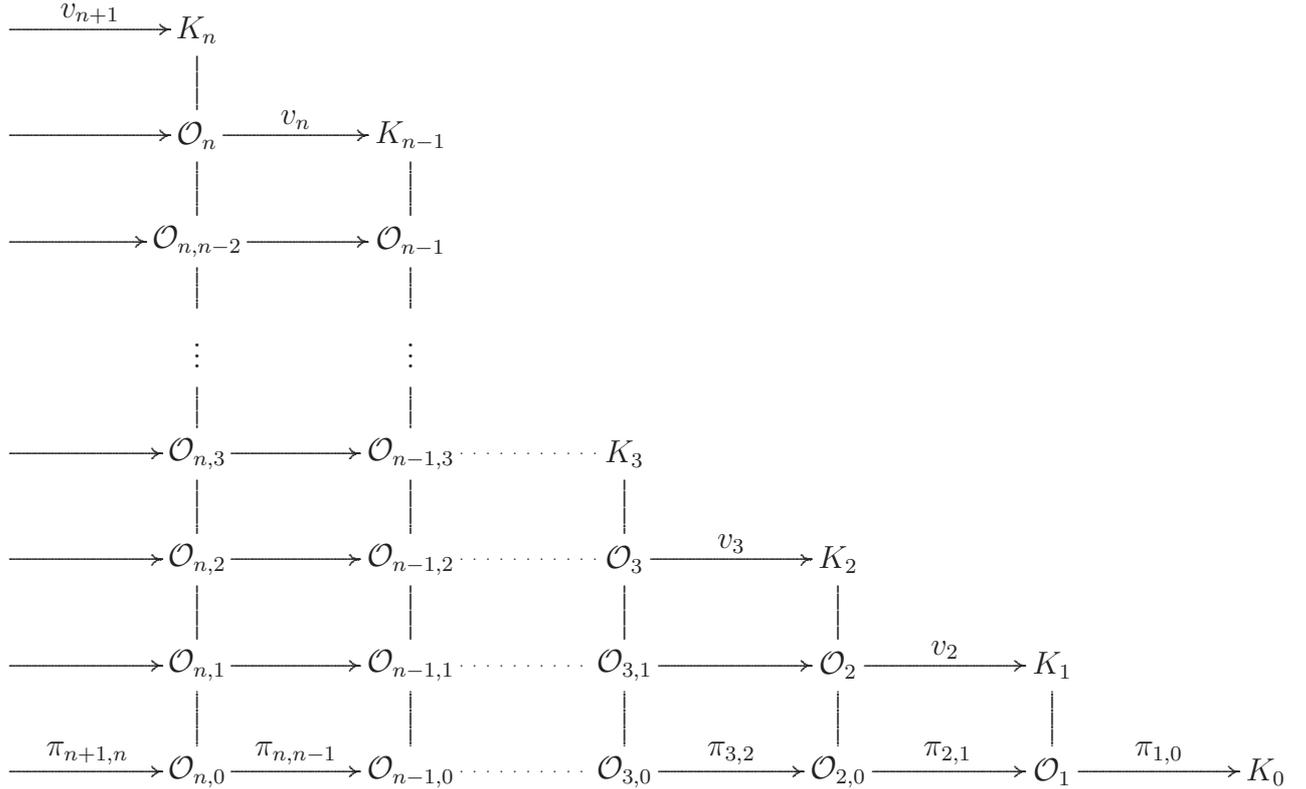

\begin{center}
\begin{displaymath}
\begindc{\commdiag}[40]
\obj(-1,0)[aa]{}
\obj(-1,1)[ab]{}
\obj(-1,2)[ac]{}
\obj(-1,3)[ad]{}
\obj(-1,4)[ae]{}
\obj(-1,5)[af]{}
\obj(-1,6)[ag]{}
\obj(-1,7)[ah]{}
\obj(1,0)[ba]{$\mathcal{O}_{n,0}$}
\obj(1,1)[bb]{$\mathcal{O}_{n,1}$}
\obj(1,2)[bc]{$\mathcal{O}_{n,2}$}
\obj(1,3)[bd]{$\mathcal{O}_{n,3}$}
\obj(1,4)[be]{$\vdots$}
\obj(1,5)[bf]{$\mathcal{O}_{n,n-2}$}
\obj(1,6)[bg]{$\mathcal{O}_{n}$}
\obj(1,7)[bh]{$K_{n}$}
\mor{ba}{bb}{}[\atright,\solidline]
\mor{bb}{bc}{}[\atright,\solidline]
\mor{bc}{bd}{}[\atright,\solidline]
\mor{bd}{be}{}[\atright,\solidline]
\mor{be}{bf}{}[\atright,\solidline]
\mor{bf}{bg}{}[\atright,\solidline]
\mor{bg}{bh}{}[\atright,\solidline]
\mor{aa}{ba}{$\pi_{n+1,n}$}[\atleft,\solidarrow]
\mor{ab}{bb}{}
\mor{ac}{bc}{}
\mor{ad}{bd}{}
\mor{af}{bf}{}
\mor{ag}{bg}{}
\mor{ah}{bh}{$v_{n+1}$}[\atleft,\solidarrow]
\obj(3,0)[ca]{$\mathcal{O}_{n-1,0}$}
\obj(3,1)[cb]{$\mathcal{O}_{n-1,1}$}
\obj(3,2)[cc]{$\mathcal{O}_{n-1,2}$}
\obj(3,3)[cd]{$\mathcal{O}_{n-1,3}$}
\obj(3,4)[ce]{$\vdots$}
\obj(3,5)[cf]{$\mathcal{O}_{n-1}$}
\obj(3,6)[cg]{$K_{n-1}$}
\obj(3,7)[ch]{}
\mor{ca}{cb}{}[\atright,\solidline]
\mor{cb}{cc}{}[\atright,\solidline]
\mor{cc}{cd}{}[\atright,\solidline]
\mor{cd}{ce}{}[\atright,\solidline]
\mor{ce}{cf}{}[\atright,\solidline]
\mor{cf}{cg}{}[\atright,\solidline]
\mor{ba}{ca}{$\pi_{n,n-1}$}[\atleft,\solidarrow]
\mor{bb}{cb}{}
\mor{bc}{cc}{}
\mor{bd}{cd}{}
\mor{bf}{cf}{}
\mor{bg}{cg}{$v_{n}$}[\atleft,\solidarrow]
\obj(5,0)[da]{$\mathcal{O}_{3,0}$}
\obj(5,1)[db]{$\mathcal{O}_{3,1}$}
\obj(5,2)[dc]{$\mathcal{O}_{3}$}
\obj(5,3)[dd]{$K_{3}$}
\mor{da}{db}{}[\atright,\solidline]
\mor{db}{dc}{}[\atright,\solidline]
\mor{dc}{dd}{}[\atright,\solidline]
\mor{ca}{da}{}[\atright,\dotline]
\mor{cb}{db}{}[\atright,\dotline]
\mor{cc}{dc}{}[\atright,\dotline]
\mor{cd}{dd}{}[\atright,\dotline]
\obj(7,0)[ea]{$\mathcal{O}_{2,0}$}
\obj(7,1)[eb]{$\mathcal{O}_{2}$}
\obj(7,2)[ec]{$K_{2}$}
\mor{ea}{eb}{}[\atright,\solidline]
\mor{eb}{ec}{}[\atright,\solidline]
\mor{da}{ea}{$\pi_{3,2}$}[\atleft,\solidarrow]
\mor{db}{eb}{}
\mor{dc}{ec}{$v_{3}$}[\atleft,\solidarrow]
\obj(9,0)[fa]{$\mathcal{O}_{1}$}
\obj(9,1)[fb]{$K_{1}$}
\mor{fa}{fb}{}[\atright,\solidline]
\mor{ea}{fa}{$\pi_{2,1}$}[\atleft,\solidarrow]
\mor{eb}{fb}{$v_{2}$}[\atleft,\solidarrow]
\obj(11,0)[ga]{$K_{0}$}
\mor{fa}{ga}{$\pi_{1,0}$}[\atleft,\solidarrow]
\enddc
\end{displaymath}
\end{center}
\caption{The projective system}
\label{fig:3}
\end{figure}

Let $\mathcal{O}$
together with the natural projections 
$\pi_{\infty,n}:\mathcal{O}\longrightarrow\mathcal{O}_{n,0}$
be the projective limit of this system,
and let $K$ be the quotient field of $\mathcal{O}$.
By \cite[Lemma 3.5]{Fehm-Paran11}
, $\mathcal{O}$ is a valuation ring.

For each $n<\omega$, let $\mathfrak{p}_{n}$ denote the kernel of $\pi_{\infty,n}$,
and let $\mathcal{O}_{\mathfrak{p}_{n}}$ denote the localisation of $\mathcal{O}$ at $\mathfrak{p}_{n}$.
Since
$\mathfrak{p}_{n}\supseteq\mathfrak{p}_{n+1}$,
we have $\mathcal{O}_{\mathfrak{p}_{n}}\subseteq\mathcal{O}_{\mathfrak{p}_{n+1}}$.
Since $\{0\}=\bigcap_{n<\omega}\mathfrak{p}_{n}$, we have $K=\bigcup_{n<\omega}\mathcal{O}_{\mathfrak{p}_{n}}$.

Let $v^{*}_{n}$ denote the valuation on $K$ with valuation ring $\mathcal{O}_{n}^{*}:=\mathcal{O}_{\mathfrak{p}_{n}}$.
Then $(v_{n}^{*})_{n<\omega}$ is a strictly increasing (i.e. increasingly coarse) chain of valuations on $K$;
and the finest common coarsening of this chain is the trivial valuation. 

For each $n<\omega$, $v_{0}^{*}$ induces a valuation $\overline{v_{0}}$ on $Kv_{n}^{*}=K_{n}$.
In fact this valuation is equal to the composition $v_{n,0}$ which was described above.
As the composition of defectless valuations (see
\autoref{lem:n.hens.not.q.hens})
, $\overline{v_{0}}=v_{n,0}$ is defectless.
Furthermore, the value group $\overline{v_{0}}(Kv_{n}^{*})=v_{n,0}K_{n}$ is an extension of divisible groups; thus it is divisible.

We have shown that the hypotheses of \autoref{lem:projective.limit} are satisfied.
Therefore $(K,v_{0}^{*})$ is defectless.

The field $K$ is non-henselian by exactly the same arguments as in \cite[Proposition 6.7]{Fehm-Jahnke15}, which we omit here.

Finally, let $K^{*}$ be an elementary extension of $K$ in which the least common coarsening $v^{*}$ of $v_{n}^{*}$ is nontrivial.
For example, we may take a nonprincipal ultraproduct of the family $(K,v_{n}^{*})$, $n<\omega$.
Then $(K^{*},v^{*})$ is a perfect nontrivially valued field, which is henselian and defectless, and has divisible value group.
Therefore $K$ is of divisible-tame type, as required.
\end{proof}


\section{Fields of mixed-characteristic}

The goal of this section is to prove part (B) of \autoref{thm:main}.
We've already seen in \autoref{cor:mc} that ${\bf(mc)}$ implies  ${\bf(eh)}$.
This leaves us with showing for mixed characteristic fields that
\begin{enumerate}
\item ${\bf(h)}$ does not imply $(\emptyset\text{\rm\bf-def})$, and
\item ${\bf(def)}$ implies $(\emptyset\text{\rm\bf-def})$.
\end{enumerate}

\subsection{Self-similarity}
\label{section:self-similar}

In the following lemma, we adapt the Ax--Kochen argument from \autoref{lem:eh.1} to the slightly more general setting
of $t$-henselian fields of divisibly tame type.

\begin{lemma}\label{lem:self.similar.eq.char}
Let $k$ be an equicharacteristic t-henselian field of divisible-tame type.
Then $k\equiv k((\mathbb{Q}))$.

Thus $k$ is elementarily equivalent to all equicharacteristic nontrivial tame valued fields with residue field elementarily equivalent to $k$ and divisible value group.
\end{lemma}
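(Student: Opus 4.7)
The plan is to mimic the Ax--Kochen/Ershov argument from \autoref{lem:eh.1}, but using the Ax--Kochen/Ershov principle for tame valued fields (\cite{Kuhlmann}) in place of the classical equicharacteristic-zero version. Let $(K,v)$ witness that $k$ is of divisible-tame type: so $K\equiv k$ in $\mathcal{L}_\mathrm{ring}$, and $(K,v)$ is a nontrivially valued tame field with $vK$ divisible. Since $(K,v)$ is tame, $K$ is perfect; perfection in a fixed characteristic is $\mathcal{L}_\mathrm{ring}$-elementary, so $k$ is perfect too. All value groups appearing below will be nontrivial divisible ordered abelian groups (hence pairwise elementarily equivalent) and all residue fields will be perfect, so every valued field I consider is tame and AKE applies freely.

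First apply AKE to the tame valued fields $(K,v)$ and $(Kv((\mathbb{Q})),v_\mathrm{std})$, which share residue field $Kv$ and both have nontrivial divisible value groups; this yields $K\equiv Kv((\mathbb{Q}))$, hence $k\equiv Kv((\mathbb{Q}))$. Next view the iterated power series field $Kv((\mathbb{Q}))((\mathbb{Q}))$ with two different valuations. With the standard rank-one valuation it is tame with residue field $Kv((\mathbb{Q}))$ and value group $\mathbb{Q}$; with the rank-two composition valuation it is tame (composition of two defectless valuations) with residue field $Kv$ and divisible value group $\mathbb{Q}\oplus_\mathrm{lex}\mathbb{Q}$. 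AKE applied to the rank-one valuation, using $k\equiv Kv((\mathbb{Q}))$, gives $k((\mathbb{Q}))\equiv Kv((\mathbb{Q}))((\mathbb{Q}))$; AKE applied to the rank-two valuation, compared against $(Kv((\mathbb{Q})),v_\mathrm{std})$, gives $Kv((\mathbb{Q}))((\mathbb{Q}))\equiv Kv((\mathbb{Q}))$. Chaining yields
$$k\equiv Kv((\mathbb{Q}))\equiv Kv((\mathbb{Q}))((\mathbb{Q}))\equiv k((\mathbb{Q})),$$
which is the first statement.

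For the ``thus''-clause, given any equicharacteristic nontrivial tame $(L,w)$ with $Lw\equiv k$ and $wL$ divisible, two further applications of AKE give $L\equiv Lw((\mathbb{Q}))\equiv k((\mathbb{Q}))$ (the second using $Lw\equiv k$), and the first statement then yields $L\equiv k$. The main obstacle is bookkeeping: in positive equicharacteristic one genuinely needs Kuhlmann's AKE for tame valued fields rather than the characteristic-zero version used in \autoref{lem:eh.1}, and one must verify that each valued field in play is tame. This reduces to perfectness of each residue field (transparent, since a perfect residue field together with a $p$-divisible value group makes a power series field perfect) and defectlessness of each valuation (automatic for maximal valued fields such as the power series involved, and for compositions of defectless valuations).
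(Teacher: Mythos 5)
Your proof is correct and follows essentially the same route as the paper: it produces a tame witness $(K,v)$, runs the same chain $k\equiv Kv((\mathbb{Q}))\equiv Kv((\mathbb{Q}))((\mathbb{Q}))\equiv k((\mathbb{Q}))$ via Kuhlmann's AKE principle for equicharacteristic tame fields (reading the double power series field with both its rank-one and rank-two valuations), and derives the second clause from the completeness of the theory of tame valued fields with divisible value group and residue field elementarily equivalent to $k$. The extra bookkeeping you supply (perfection of $k$, tameness of each valued field in play) is correct and simply makes explicit what the paper leaves implicit.
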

\begin{proof}
By definition of `divisible-tame type' there exists a field $K\equiv k$ and a nontrivial valuation $v$ on $K$ such that $(K,v)$ is tame and $vK$ is divisible.
By repeated use of the Ax--Kochen principle for equicharacteristic tame valued fields (see \cite[Theorem 1.4]{Kuhlmann}), 
we get as in \autoref{lem:eh.1}
\begin{align*} K &\equiv Kv((\mathbb{Q}))\\
&\equiv Kv((\mathbb{Q}))((\mathbb{Q}))\\
&\equiv k((\mathbb{Q}))
\end{align*}
where $\equiv$ always stands for elementary equivalence in $\mathcal{L}_\mathrm{ring}$.

The second claim follows from the completeness of the theory of valued fields with nontrivial tame valuations, divisible value groups, and residue fields elementarily equivalent to $k$.
\end{proof}

Of course, in mixed-characteristic, a field cannot be elementarily equivalent to its residue field, simply for reasons of characteristic.
Instead, we give the following definition.

\begin{definition}
We say a valued field $(L,w)$ is {\em self-similar} if there is an elementary extension $(L^{*},w^{*})\succeq(L,w)$ and a valuation $u$ on $L^{*}$ which is not equal to $w^{*}$ such that $(L^{*},w^{*})\equiv(L^{*},u)$.
\end{definition}

It is clear that if $(L,w)$ is self-similar then $w$ cannot be $\emptyset$-definable.

\begin{proposition}\label{lem:symmetry.mixed.characteristic}
Let $p$ be any prime.
Let $k$ be a field of characteristic $p$ which is $t$-henselian and of divisible-tame type.
Let $(L,w)$ be a
mixed-characteristic
tame valued field of with $wL\equiv\mathbb{Q}$ and $Lw=k$.
Then $(L,w)$ is self-similar.
\end{proposition}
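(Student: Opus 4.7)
The plan is to produce, in a suitable elementary extension $(L^{*},w^{*})\succeq(L,w)$, a nontrivial refinement $u$ of $w^{*}$ such that $(L^{*},u)$ and $(L^{*},w^{*})$ have the same Ax--Kochen/Ershov invariants, and then invoke the AKE principle for tame valued fields in mixed characteristic to conclude $(L^{*},w^{*})\equiv(L^{*},u)$. The refinement $u$ will be obtained by composing $w^{*}$ with a nontrivial tame henselian valuation on the residue field $L^{*}w^{*}$.

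For the setup, I would exploit the divisible-tame type hypothesis on $k$ together with \autoref{lem:self.similar.eq.char} to get $k\equiv k((\mathbb{Q}))$, where the latter carries its natural tame henselian power series valuation of residue field $k$ and divisible value group $\mathbb{Q}$. By Keisler--Shelah, choose an ultrafilter $\mathcal{U}$ such that $k^{\mathcal{U}}\cong k((\mathbb{Q}))^{\mathcal{U}}$ as $\mathcal{L}_{\mathrm{ring}}$-structures, and set $(L^{*},w^{*}):=(L,w)^{\mathcal{U}}$. Since the residue map is $\mathcal{L}_{\mathrm{val}}$-definable, $L^{*}w^{*}=(Lw)^{\mathcal{U}}=k^{\mathcal{U}}$; composing with the chosen isomorphism transports the ultrapower of the power series valuation onto $L^{*}w^{*}$, yielding a nontrivial tame henselian valuation $\bar{v}$ on $L^{*}w^{*}$ with $\bar{v}(L^{*}w^{*})\equiv\mathbb{Q}$ and $(L^{*}w^{*})\bar{v}\equiv k$.

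Next I would set $u:=\bar{v}\circ w^{*}$, a valuation on $L^{*}$ whose valuation ring sits strictly inside $\mathcal{O}_{w^{*}}$ (since $\bar{v}$ is nontrivial), so $u\neq w^{*}$. The composition has residue field $(L^{*}w^{*})\bar{v}\equiv k$, a value group fitting in a short exact sequence
\[
0\to\bar{v}(L^{*}w^{*})\to uL^{*}\to w^{*}L^{*}\to 0
\]
of nontrivial divisible ordered abelian groups (so $uL^{*}$ is itself nontrivial and divisible, hence $\equiv\mathbb{Q}$), and residue characteristic $p$ while $L^{*}$ has characteristic $0$, making $u$ of mixed characteristic $(0,p)$. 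Standard facts about composing valuations preserve tameness (perfect residue field, $p$-divisibility of the value group, and defectlessness all pass to compositions), so $(L^{*},u)$ is a mixed-characteristic tame valued field with the same Ax--Kochen/Ershov invariants as $(L^{*},w^{*})$.

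Finally, applying the Ax--Kochen/Ershov principle for mixed-characteristic tame valued fields (\cite[Theorem 1.4]{Kuhlmann}) yields $(L^{*},w^{*})\equiv(L^{*},u)$, witnessing self-similarity of $(L,w)$. The main obstacle is the first step: arranging in the elementary extension that $L^{*}w^{*}$ genuinely \emph{carries} a suitable tame henselian valuation, rather than merely being elementarily equivalent to a field that does. This is precisely what the divisible-tame type hypothesis (converted via the equicharacteristic self-similarity of \autoref{lem:self.similar.eq.char}) together with the Keisler--Shelah isomorphism of ultrapowers allow us to accomplish.
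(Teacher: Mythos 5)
Your construction is fine up to the last step, and up to that point it runs parallel to the paper: transporting the tame valuation from $k((\mathbb{Q}))^{\mathcal{U}}$ onto $L^{*}w^{*}=k^{\mathcal{U}}$ via Keisler--Shelah, and forming the composition $u=\bar{v}\circ w^{*}$, are both sound (and the composite is indeed a nontrivial mixed-characteristic tame valuation with divisible value group and residue field elementarily equivalent to $k$). The genuine gap is the final inference: you conclude $(L^{*},w^{*})\equiv(L^{*},u)$ from the fact that the two valued fields have elementarily equivalent residue fields and elementarily equivalent (divisible) value groups, citing ``the AKE principle for mixed-characteristic tame valued fields''. But \cite[Theorem 1.4]{Kuhlmann} is an $\mathrm{AKE}_{\preceq}$ (and $\mathrm{AKE}_{\exists}$) principle for \emph{extensions} of tame fields; the relative completeness statement you need --- that any two mixed-characteristic tame valued fields with matching invariants are elementarily equivalent --- is only established by Kuhlmann in \emph{equal} characteristic. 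Your $(L^{*},w^{*})$ and $(L^{*},u)$ share an underlying field but neither is a valued subfield of the other (one valuation properly refines the other), so no extension-form AKE principle applies to the pair directly, and the step is unjustified as written. (Incidentally, you flag the first step as the main obstacle, but that step is fine; it is this last one that fails.)

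This is precisely the difficulty the paper's proof is built to avoid. Instead of comparing $w^{*}$ with its refinement on $L^{*}$ itself, the paper passes to a further tame extension $(L',w')$ of $(L^{*},w^{*})$ whose residue field is $K((\mathbb{Q}))$ (where $K=L^{*}w^{*}$), and sets $u:=v_{t}\circ w'$, whose residue field drops back down to $K$. Then \emph{both} $(L',w')$ and $(L',u)$ are extensions of the single tame valued field $(L^{*},w^{*})$, each with elementary residue field extension ($K((\mathbb{Q}))/K$, respectively $K/K$) and elementary value group extension (divisible ordered abelian groups are model complete), so $\mathrm{AKE}_{\preceq}$ for extensions applies to each separately and gives $(L^{*},w^{*})\preceq(L',w')$ and $(L^{*},w^{*})\preceq(L',u)$, whence $(L',w')\equiv(L',u)$. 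To repair your argument you would need to either interpolate this common-base construction or supply an independent proof of relative completeness for mixed-characteristic tame fields with divisible value groups, which is not available from the cited source.
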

\begin{proof}
By definition of `divisible-tame type' there exists a field $K\equiv k$ and a nontrivial equicharacteristic valuation $v$ on $K$ such that $(K,v)$ is tame and $vK$ is divisible.
By \autoref{lem:self.similar.eq.char}, $K\equiv Kv$.

By the Keisler-Shelah Theorem (\cite[Theorem 8.5.10]{hod}), we may assume that $K$ is an ultraproduct of $k$.
Let $(L^{*},w^{*})$ be the corresponding ultraproduct of $(L,w)$;
thus $(L^{*},w^{*})\succeq(L,w)$ is a tame valued field of mixed characteristic
with a divisible value group and $L^{*}w^{*}=K$.

Consider the field $K((\mathbb{Q}))$ with the $t$-adic valuation $v_{t}$.
Let $v':=v\circ v_{t}$ denote the composition of $v$ and $v_{t}$.
Then $(K((\mathbb{Q})),v')/(K,v)$ is an extension of tame valued field with divisible value groups.
Note that the theory of divisible ordered abelian groups is model complete.
The extension of residue fields is trivial:
both residue fields are $Kv$.
By the $\mathrm{AKE}_{\preceq}$-principle for equicharacteristic tame valued fields (see \cite[Theorem 1.4]{Kuhlmann}) we have $(K,v)\preceq(K((\mathbb{Q})),v')$.

Let $(L',w')$ denote an extension of $(L^{*},w^{*})$ chosen so that
$w'L'$ is divisible and
$L'w'=K((\mathbb{Q}))$.
By passing to a maximal immediate extension if necessary,
we may assume that $(L',w')$ is tame.

$$
\xymatrix{
L'\ar@{->}[r] & K((\mathbb{Q}))\ar@{->}[r] & K\ar@{->}[r] & Kv\\
L^{*}\ar@{-}[u]\ar@{->}[r] & K\ar@{-}[u]\ar@{->}[rr]\ar@{-}[ur] & & Kv\ar@{-}[u]\\
L\ar@{-}[u]\ar@{->}[r] & k\ar@{-}[u] \\
}
$$

Let $u:=v_{t}\circ w^{'}$ be the composition of $v$ and $w^{*}$.
Then both $(L',w')$ and $(L',u)$ are tame valued fields which extend $(L^{*},w^{*})$.
The residue field extension of $(L',w')/(L^{*},w^{*})$ is $K((\mathbb{Q}))/K$, which is an elementary extension.
The residue field extension of $(L',u)/(L^{*},w^{*})$ is $K/K$, which is trivial, thus elementary.
Again, note also that all extensions of divisible value groups are elementary because the theory of divisible ordered abelian groups is model complete.

By the $\mathrm{AKE}_{\preceq}$-principle for mixed-characteristic tame valued fields (see \cite[Theorem 1.4]{Kuhlmann}),
$(L',w')\equiv_{(L^{*},w^{*})}(L',u)$.
In particular $(L',w')\equiv (L',u)$.
Therefore $(L,w)$ is self-similar, as required.
\end{proof}

\subsection{`Henselian' does not imply `definable'} \label{sec:mc.def}

One of the remaining questions in mixed characteristic (short of giving a characterisation of fields with ${\bf(def})$) is whether or not all 
fields in $\mathcal{K}_{0,p}$ admit definable nontrivial henselian valuations. The answer is `not'. 
For any prime $p$, we exhibit in \autoref{ex:mc.no.def} a field in $\mathcal{K}_{0,p}$ which does not admit a $\emptyset$-definable nontrivial henselian valuation. By 
\autoref{thm:mix.char}, these fields do not even admit a definable nontrivial henselian valuation.

\begin{proposition}
Let $(L,w)$ be a henselian valued field of mixed characteristic such that $wL=\mathbb{Q}$ and $Lw$ is a non-henselian
$t$-henselian field of divisible-tame type. \label{prp:self}
Then $L$ does not admit a $\emptyset$-definable nontrivial henselian valuation.
\end{proposition}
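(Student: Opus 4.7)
The plan is to prove the proposition in two stages: first a reduction to showing that the valuation $w$ itself is not $\emptyset$-definable, and then a self-similarity argument establishing that.

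For the reduction, I would argue that every nontrivial henselian valuation on $L$ coincides with $w$. Two constraints pin $w$ down: (i) since $wL = \mathbb{Q}$ is archimedean, it has no proper nontrivial convex subgroups, so $w$ admits no proper nontrivial coarsening on $L$; and (ii) since $Lw$ is non-henselian, any nontrivial henselian refinement of $w$ would induce a nontrivial henselian valuation on the residue field $Lw$, a contradiction. Combining these with the canonical henselian valuation theory from \autoref{section.Sirince} and a short case analysis on whether $H_{2}(L)$ is empty, one concludes $w = v_{L}$ and that $w$ is the unique nontrivial henselian valuation on $L$. The case where $L$ is itself separably (equivalently, algebraically) closed is handled separately using the standard model-theoretic fact that algebraically closed fields carry no $\emptyset$-definable nontrivial valuation. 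Either way, the proposition reduces to showing that $w$ is not $\emptyset$-definable on $L$.

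For the self-similarity, I would follow the blueprint of the proof of \autoref{lem:symmetry.mixed.characteristic}. Using the divisible-tame type of $k := Lw$, fix a tame valued field $(K, v)$ with $K \equiv k$ and $vK$ divisible. By the Keisler--Shelah theorem, choose an ultrafilter $\mathcal{U}$ with $k^{\mathcal{U}} \cong K^{\mathcal{U}}$, and set $(L^{*}, w^{*}) := (L, w)^{\mathcal{U}}$. The residue field $L^{*}w^{*} \cong K^{\mathcal{U}}$ then inherits a tame valuation $\tilde{v}$ with divisible value group. I would then pass to a tame valued field extension $(L', w') \supseteq (L^{*}, w^{*})$ with residue field $K((\mathbb{Q}))$ (e.g., via a maximal immediate tame extension), form the composition $u := v_{t} \circ w'$ with $v_{t}$ the $t$-adic valuation on $K((\mathbb{Q}))$, and apply the $\mathrm{AKE}_{\preceq}$-principle for tame valued fields (together with model-completeness of divisible ordered abelian groups) to obtain $(L', w') \equiv (L', u)$ with $u \neq w'$. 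Through the elementary chain $(L, w) \preceq (L^{*}, w^{*}) \preceq (L', w')$, this witnesses self-similarity of $(L, w)$, which prevents $w$ from being $\emptyset$-definable.

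The main obstacle will be handling the fact that $(L, w)$ is only henselian rather than tame, so the elementary extension $(L^{*}, w^{*}) \preceq (L', w')$ does not follow directly from $\mathrm{AKE}_{\preceq}$ for tame fields as it did in \autoref{lem:symmetry.mixed.characteristic} (where $(L^{*}, w^{*})$ inherited tameness from $(L, w)$). Resolving this will require a careful choice of $(L', w')$ so that, even though $(L^{*}, w^{*})$ itself is not tame, the elementarity of the residue field and value group extensions (arising from the divisible-tame type of $k$ and Lemma 4.4) is strong enough to propagate the $\mathrm{AKE}_{\preceq}$ conclusion across the chain back to $(L, w)$. This is the key technical point of the argument.
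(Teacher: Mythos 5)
Your proposal follows essentially the same route as the paper's proof, which consists of three lines: the observation that $w$ is the unique nontrivial henselian valuation on $L$, a citation of \autoref{lem:symmetry.mixed.characteristic} to obtain self-similarity of $(L,w)$, and the remark that self-similarity of $(L^{*},w^{*})$ rules out $\emptyset$-definability of $w^{*}$ and hence of $w$. Your first stage supplies the justification for the uniqueness claim that the paper leaves implicit, and your argument is the correct one: $\mathbb{Q}$ has no proper nontrivial convex subgroups, so $w$ has no proper nontrivial coarsening; a nontrivial henselian refinement of $w$ would induce a nontrivial henselian valuation on the non-henselian field $Lw$; and comparability of an arbitrary henselian valuation with $w$ is settled by the case analysis on $H_{2}(L)$. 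Your second stage is an inlined reproof of \autoref{lem:symmetry.mixed.characteristic} rather than a citation of it, but the content is identical.

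The ``main obstacle'' you flag at the end is genuine, but it is inherited from the paper rather than introduced by you. \autoref{lem:symmetry.mixed.characteristic} is stated for \emph{tame} $(L,w)$ --- tameness of $(L,w)$ is what makes the ultrapower $(L^{*},w^{*})$ tame, which in turn is needed to apply the $\mathrm{AKE}_{\preceq}$-principle for tame fields to the extensions $(L',w')/(L^{*},w^{*})$ and $(L',u)/(L^{*},w^{*})$ --- whereas \autoref{prp:self} assumes only that $(L,w)$ is henselian, and the paper's proof cites \autoref{lem:symmetry.mixed.characteristic} without comment on the mismatch. In the only place the proposition is applied (\autoref{ex:mc.no.def}) the field $(L,w)$ is chosen tame, so nothing downstream is affected; but a proof of the proposition exactly as stated would indeed have to close this gap (or the hypothesis should be strengthened from ``henselian'' to ``tame''), and your proposal, like the paper's own argument, does not yet do so.
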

\begin{proof}
First note that $w$ is the only nontrivial henselian valuation on $L$.
By \autoref{lem:symmetry.mixed.characteristic}, there exists $(L,w)\preceq(L^{*},w^{*})$ and a valuation $u$ on $L^{*}$ which is different from $w^{*}$ such that $(L^{*},w^{*})\equiv(L^{*},u)$.
Consequently, $w^{*}$ is not $\emptyset$-definable in $L^{*}$, and $w$ is not $\emptyset$-definable in $L$.
\end{proof}

\begin{example}\label{ex:mc.no.def}
Let $p$ be any prime and let $k$ be a field of characteristic $p$ which is non-henselian but $t$-henselian of divisible-tame type,
e.g., any field constructed in the proof of \autoref{prp:near.hens.divisible.tame}.
Let $(L,w)$ be a mixed-characteristic tame valued field with $wL=\mathbb{Q}$ and $Lw=k$.

By \autoref{prp:self}, $L$ does not admit any $\emptyset$-definable nontrivial henselian valuation. 
\end{example}

\subsection{`Definable' implies `$\emptyset$-definable'}
The aim of this subsection is to show that for any prime $p$ and any $K \in \mathcal{K}_{0,p}$, we have
$$({\textbf {def}}) \implies (\emptyset\textrm{-}{\textbf {def}})$$ 

The proof uses the machinery of $q$-henselian valuations as developed in \cite{Jahnke-Koenigsmann14}.
Let $q$ be any prime. 
Recall that a valuation $v$ on a field $L$ is called $q$-henselian if $v$ extends uniquely to every Galois extension of $L$
of $q$-power degree. Let $L$ be a field
admitting nontrivial Galois extensions of $q$-power degree; we denote this by $L\neq L(q)$. 
Then, there is always a canonical $q$-henselian
valuation $v_L^q$, and the definiton is similar to that of the canonical henselian valuation.
Again, we divide the class of $q$-henselian valuations on $L$
 into
two subclasses, namely
$$H^q_1(L) = \{v\; q\textrm{-henselian on } L \,|\, Lv \textrm{ admits a Galois extension of degree }q\}$$
and
$$H^q_2(L) = \{ v\; q\textrm{-henselian on } L \,|\, Lv \textrm{ does not admit a Galois extension of degree }q \}.$$ 
One can deduce that any valuation $v_2 \in H^q_2(L)$ 
is \emph{finer} than any $v_1 \in H^q_1(L)$, i.e. 
${\mathcal O}_{v_2} \subsetneq {\mathcal O}_{v_1}$,
and that any two valuations in $H^q_1(L)$ are comparable.
Furthermore, if $H^q_2(L)$ is non-empty, then there exists a unique coarsest
valuation
$v_L^q$ in $H^q_2(L)$; otherwise there exists a unique finest 
valuation $v_L^q \in H^q_1(L)$.
In either case, $v_L^q$ is called the \emph{canonical $q$-henselian valuation}.
If $L$ is $q$-henselian then $v_L^q$ is non-trivial.
Note that any henselian valuation on $L$ is $q$-henselian and thus comparable to $v_L^q$

Our proof uses a special case of
the uniform definabilty of canonical $q$-henselian valuation as proven in \cite[Main Theorem]{Jahnke-Koenigsmann14}: 
Let $\mathcal{F}_q$ be the (elementary) class of fields $L$ such that $L$ has characteristic away from
$q$ and admits a Galois extension of degree $q$,
 and such that $L$ contains a primitive $q$th root of unity $\zeta_q$. In case $q=2$, assume further that $L$ is non-orderable.
There is a parameter-free 
$\mathcal{L}_\textrm{ring}$-formula $\varphi(x)$ such that we have
$$L \in \mathcal{F}_q \implies \varphi(L) = \mathcal{O}_{v_L^q}.$$

Furthermore, we will make repeated use of the following
\begin{fact}[{\cite[p.\,43 and Corollary 4.1.4]{EP05}}]\label{fact:overrings.chain}
Let $\mathcal{O}\subseteq K$ be a valuation ring. The overrings of $\mathcal{O}$ in $K$ form a chain under inclusion and each overring is a valuation ring. If $\mathcal{O}$ is henselian, then all overrings of $\mathcal{O}$ in $K$ are henselian.
\end{fact}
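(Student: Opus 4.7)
The plan is to establish the three assertions of the fact in turn, using only basic valuation-theoretic considerations; the argument is in essence a streamlining of \cite[Section 2]{EP05} and the reader familiar with valuation theory should recognise all the ingredients.

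First, I would verify that every overring of $\mathcal{O}$ in $K$ is a valuation ring. For any nonzero $x\in K$, since $\mathcal{O}$ is a valuation ring, at least one of $x$ and $x^{-1}$ lies in $\mathcal{O}\subseteq\mathcal{O}'$. This is exactly the defining property of a valuation ring. Second, to show that the overrings form a chain, I would invoke the standard bijection between overrings of $\mathcal{O}$ in $K$ and prime ideals of $\mathcal{O}$: an overring $\mathcal{O}'$ corresponds to the prime $\mathfrak{p}:=\mathfrak{m}_{\mathcal{O}'}\cap\mathcal{O}$, with inverse the localisation $\mathfrak{p}\mapsto\mathcal{O}_{\mathfrak{p}}$; and the correspondence is order-reversing. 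The prime ideals of a valuation ring are totally ordered under inclusion, because for any $a,b\in\mathcal{O}$ either $a\mid b$ or $b\mid a$, which forces the set of all ideals of $\mathcal{O}$ (and in particular the primes) to be linearly ordered. Pulling this back along the bijection gives that the overrings form a chain.

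Third, for the henselianity statement, I would argue via unique extension to algebraic extensions. Recall that a valuation is henselian if and only if it extends uniquely to every algebraic extension of the base field. Let $\mathcal{O}'$ be an overring of $\mathcal{O}$, with associated valuations $v$ (finer) and $v'$ (coarser). Let $L/K$ be algebraic, and suppose $w_1',w_2'$ are two extensions of $v'$ to $L$. Pick refinements $w_1,w_2$ of $w_1',w_2'$ extending $v$ (such refinements exist because $v$ extends to $L$, and the chosen extension of $v$ refines some extension of $v'$, which by conjugation we may arrange to be $w_i'$ respectively). Since $v$ is henselian, $w_1=w_2$; but then the coarsening determined by the preimage of $\mathfrak{p}\subseteq\mathcal{O}$ in $\mathcal{O}_{w_1}=\mathcal{O}_{w_2}$ is a single valuation ring, forcing $w_1'=w_2'$. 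Thus $v'$ also has a unique extension to $L$, and so $\mathcal{O}'$ is henselian.

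I expect the second step (the bijection of overrings with primes, and the linear ordering) to be the cleanest, while the henselianity argument requires the most care, particularly in pinning down that refinements of extensions of $v'$ to $L$ can be arranged to extend $v$. However, none of this is genuinely difficult: the entire fact is a well-known compendium of elementary results in valuation theory, which is why the authors have chosen simply to cite \cite{EP05} rather than reproduce the proof.
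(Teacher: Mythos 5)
The paper does not prove this statement at all: it is recorded as a Fact with a citation to \cite[p.\,43 and Corollary 4.1.4]{EP05}, so there is no in-paper argument to compare against, and your task is really to reconstruct the textbook proof. Your reconstruction is essentially correct and follows the standard route. Steps one and two (overrings are valuation rings; the order-reversing bijection with the totally ordered set of primes of $\mathcal{O}$, hence a chain) are fine as written. The henselianity step is the right idea --- use the characterisation ``henselian $\iff$ unique extension to every algebraic extension'' --- but the one place that needs repair is your parenthetical justification for the existence of refinements $w_i$ of $w_i'$ extending $v$: conjugacy of extensions is only available for normal $L/K$, and the clean argument avoids it entirely. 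Namely, write $v=\bar v\circ v'$ with $\bar v$ the induced valuation on $Kv'$; since $Lw_i'/Kv'$ is algebraic, Chevalley gives an extension $\bar w_i$ of $\bar v$ to $Lw_i'$, and $w_i:=\bar w_i\circ w_i'$ is the desired refinement with $w_i|_K=v$. Your final step also deserves one more sentence: the reason the coarsening of $\mathcal{O}_{w_1}=\mathcal{O}_{w_2}$ lying over $\mathfrak{p}=\mathfrak{m}_{v'}\cap\mathcal{O}$ is unique is that $w_1L/vK$ is a torsion group (as $L/K$ is algebraic), so convex subgroups of $w_1L$ --- equivalently primes of $\mathcal{O}_{w_1}$ --- are determined by their traces on $vK$. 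With those two points filled in, your argument is a complete and faithful rendering of what \cite{EP05} proves.
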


We can now prove the main result of this subsection:
\begin{theorem}\label{thm:mix.char}
If $(K, v_K)$ has mixed-characteristic then
$${\bf(def)}\Longrightarrow{\bf(\emptyset\textbf{\rm\bf-def})}.$$
\end{theorem}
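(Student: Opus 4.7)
The plan is to prove the theorem by a case split on $H_{2}(K)$, combined with the uniform definability of canonical $q$-henselian valuations given in \cite[Main Theorem]{Jahnke-Koenigsmann14}. If $H_{2}(K)\neq\emptyset$ then \cite[Theorem A]{Jahnke-Koenigsmann15} already supplies a nontrivial $\emptyset$-definable henselian valuation on $K$; the hypothesis (\textbf{def}) is not needed in this case. So I may assume $H_{2}(K)=\emptyset$, so that $v_{K}$ is the unique finest henselian valuation on $K$, every henselian valuation on $K$ is a coarsening of $v_{K}$, and $Kv_{K}$ is not separably closed.

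In the remaining case my plan is to locate a prime $q$ such that (i) $K$ lies in the class $\mathcal{F}_{q}$ (so that the uniform definability of \cite{Jahnke-Koenigsmann14} yields a parameter-free $\mathcal{L}_{\mathrm{ring}}$-formula $\varphi(x)$ defining $\mathcal{O}_{v_{K}^{q}}$ in $K$), and (ii) $Kv_{K}$ admits no Galois extension of degree $q$ (equivalently, $v_{K}\in H^{q}_{2}(K)$). For such a $q$, the canonical $q$-henselian valuation $v_{K}^{q}$ is the coarsest valuation in $H^{q}_{2}(K)$ and therefore a coarsening of $v_{K}$; so by \autoref{fact:overrings.chain} it is henselian, and it is nontrivial because $K$ is $q$-henselian. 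Thus $v_{K}^{q}$ is the desired nontrivial $\emptyset$-definable henselian valuation. To produce $q$, I plan to analyse the Galois-theoretic structure of $K$ and $Kv_{K}$, using Hensel's lemma to correlate prime-to-$p$ Galois extensions of $K$ with those of $Kv_{K}$, and the mixed-characteristic condition $v_{K}(p)>0$ to control ramified extensions via the $p$-divisibility (or failure thereof) of $v_{K}K$.

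The main obstacle is the production of $q$ and, in particular, handling the residual configuration in which $v_{K}K$ is divisible and $Kv_{K}\preceq L$ for some henselian $L$ with divisible value group---precisely the case left open by \cite[Theorem B]{Jahnke-Koenigsmann15} (recorded here as \autoref{thm:JK.B}). In this configuration $(K,v_{K})$ is of a ``divisible-tame'' shape reminiscent of \autoref{ex:mc.no.def}, and \autoref{lem:symmetry.mixed.characteristic} forces $(K,v_{K})$ to be self-similar. This is precisely where the hypothesis (\textbf{def}) must be used essentially rather than the weaker (\textbf{h}): I expect the argument to proceed by showing that in this residual configuration no definable nontrivial henselian valuation can exist, so that (\textbf{def}) rules out the configuration outright and forces the existence of a suitable $q$ via the analysis of the preceding paragraph.
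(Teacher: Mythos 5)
Your reduction via \cite[Theorem A]{Jahnke-Koenigsmann15} matches the paper's opening move, and the conditional part of your plan is sound: \emph{if} there is a prime $q$ with $K\in\mathcal{F}_q$ and $Kv_K$ admitting no Galois extension of degree $q$, then $v_K\in H^q_2(K)$, so $v_K^q$ coarsens $v_K$, is henselian by \autoref{fact:overrings.chain}, nontrivial, and $\emptyset$-definable. But the proposal has two genuine gaps. First, the existence of such a $q$ is exactly what you cannot guarantee: you need $\zeta_q\in K$ \emph{and} $K\neq K(q)$ \emph{and} $Kv_K$ to be $q$-closed, and there is no reason such a $q$ exists (the last condition in particular fails for most residue fields). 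The paper avoids both obstructions: it only requires \emph{some} finite extension $L_0/K$ with $\zeta_q\in L_0$ and $L_0\neq L_0(q)$ (which always exists since $v_K$ is nontrivial), works with the uniformly interpretable family $\mathcal{L}$ of all such degree-$n$ extensions, and never asks the residue field to be $q$-closed --- instead it splits on whether $\mathrm{char}(Lv_L^q)$ is $0$ or $p$ and restricts the rings $\mathcal{O}_L^q$ back to $K$.

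Second, and more seriously, your fallback --- that in the residual configuration of \autoref{thm:JK.B} no definable nontrivial henselian valuation can exist, so that ${\bf(def)}$ rules the configuration out --- is an unproven converse of Theorem B, and the tools you cite do not deliver it. \autoref{lem:symmetry.mixed.characteristic} requires $(L,w)$ to be \emph{tame} with value group $\equiv\mathbb{Q}$ and residue field of divisible-tame type; none of this is supplied by conditions (1)--(3) of \autoref{thm:JK.B}. Even where self-similarity does apply, it only shows that $w$ itself is not $\emptyset$-definable; it says nothing about parameter-definability, nor about other (coarser) henselian valuations. This is precisely the case the theorem is designed to handle: the paper's Case 2 takes the \emph{given} parameter-definable ring $\mathcal{O}_t=\phi(K,t)$ and converts it into a parameter-free definition by quantifying over all $s$ such that $\phi(K,s)$ is a valuation ring containing the $\emptyset$-definable set $S=\bigcup_{L\in\mathcal{L}_1}\mathcal{O}_L^q\cap K$ with prescribed residue characteristic, then taking the union or intersection of the resulting chain. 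That parameter-elimination mechanism is the heart of the proof and is entirely absent from your proposal.
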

\begin{proof} Fix a prime $p$.
Let $K$ be a field with $\mathrm{char}(K)=0$ and $\mathrm{char}(Kv_K)=p>0$ which admits a definable
nontrivial henselian valuation. In particular, $K$ is not separably closed as no separably closed field admits a definable
nontrivial henselian valuation. Furthermore,
 by \cite[Theorem A]{Jahnke-Koenigsmann15}, we may assume that $Kv_{K}\neq Kv_{K}^{\mathrm{sep}}$.
Since $v_{K}$ has mixed characteristic, $v_{K}$ is nontrivial.
Thus there exists a prime $q$ and a finite extension $L_{0}/K$ such that $L_{0}\neq L_{0}(q)$ 
and $\zeta_{q}\in L_{0}$. Let $n:=[L_{0}:K]$ and define
$$\mathcal{L}:=\{L\;|\;[L:K]=n,L\neq L(q),\zeta_{q}\in L\}.$$
The family $\mathcal{L}$ is uniformly interpretable in $K$: we quantify over those $n$-tuples from $K$ which are the coefficients of irreducible polynomials over $K$, such polynomials generate extensions $L/K$ and we can define those tuples of coefficients of polynomials that generate extensions $L\in\mathcal{L}$.

Next we explain a few basic facts about the canonical $q$-henselian 
valuations $v_{L}^{q}$ that we will repeatedly use. Let $L\in\mathcal{L}$. Since $L/K$ is a finite extension and $Kv_{K}$ is not separably closed, $v_{L}$ is the unique extension of $v_{K}$ to $L$. Since $v_{L}^{q}$ is comparable to $v_{L}$, $v_{L}^{q}|_{K}$ is also comparable to $v_{K}$. Again, since $v_{L}^{q}$ is comparable to $v_{L}$, the residue characteristic of $v_{L}^{q}$ is either $0$ or $p$. Finally, since $v_{L}$ is a nontrivial $q$-henselian valuation and $L\neq L(q)$, we have that $v_{L}^{q}$ and thus
$v_{L}^{q}|_{K}$ are nontrivial.

We define
$$\mathcal{L}_{1}:=\{L\in\mathcal{L}\;|\;\mathrm{char}(Lv_{L}^{q})=p\}$$
and
$$\mathcal{L}_{2}:=\{L\in\mathcal{L}\;|\;\mathrm{char}(Lv_{L}^{q})\neq p\}=\{L\in\mathcal{L}\;|\;\mathrm{char}(Lv_{L}^{q})=0\}.$$

Just as for $\mathcal{L}$ above, both $\mathcal{L}_{1}$ and $\mathcal{L}_{2}$ are uniformly interpretable in $K$. To see that $\mathcal{L}_{1}$ is uniformly interpretable: given a uniform interpretation of $\mathcal{L}$, we then need to define which $n$-tuples correspond to extensions $L/K$ such that $\mathrm{char}(Lv_{L}^{q})=p$, and this follows from the fact that $v_{L}^{q}$ is uniformly $\emptyset$-definable in $L$, by \cite[Main Theorem]{Jahnke-Koenigsmann14}. 
Let $\Lambda_{1}(\mathbf{y})$ and $\Lambda_{2}(\mathbf{y})$ be the formulas that define those $n$-tuples corresponding to extensions $L/K$ in $\mathcal{L}_{1}$ and $\mathcal{L}_{2}$, respectively.

We proceed by a case distinction. In each case our goal is of course to find an $\emptyset$-definable nontrivial henselian valuation on $K$.

\bf Case 1\rm: Suppose first that $\mathcal{L}_{2}\neq\emptyset$ and let $L\in\mathcal{L}_{2}$. As noted above, $\mathcal{O}_{L}^{q}$ is comparable to $\mathcal{O}_{L}$. Since $L\in\mathcal{L}_{2}$, $\mathrm{char}(Lv_{L}^{q})=0$. Thus $\mathcal{O}_{L}\subset\mathcal{O}_{L}^{q}$ and $\mathcal{O}_{K}=\mathcal{O}_{L}\cap K\subset\mathcal{O}_{L}^{q}\cap K$. We have the following diagram.
$$
\xymatrix{
& L \\
K\ar@{-}[ru] & \mathcal{O}_{L}^{q}\ar@{-}[u] \\
\mathcal{O}_{L}^{q}\cap K\ar@{-}[u]\ar@{-}[ur] & \mathcal{O}_{L}\ar@{-}[u]\\
\mathcal{O}_{K}\ar@{-}[u]\ar@{-}[ur] & 
}
$$
We let
$$\mathcal{O}_{1}:=\bigcap_{L\in\mathcal{L}_{2}}\mathcal{O}_{L}^{q}\cap K.$$
It is immediate that $\mathcal{O}_{K}\subseteq\mathcal{O}_{1}$. By \autoref{fact:overrings.chain}, $\mathcal{O}_{1}$ is an henselian valuation ring. As noted above, each $\mathcal{O}_{L}^{q}\cap K$ is nontrivial. Since $\mathcal{O}_{1}\subseteq\mathcal{O}_{L}^{q}\cap K$, for each $L\in\mathcal{L}_{2}$, $\mathcal{O}_{1}$ is also nontrivial.

Finally, $\mathcal{O}_{1}$ is $\emptyset$-defined in $K$ by the formula
$$\forall \mathbf{y}\;(\Lambda_{2}(\mathbf{y})\longrightarrow\phi_{q}(x,\mathbf{y})).$$

\bf Case 2\rm: Now suppose that $\mathcal{L}_{2}=\emptyset$. We have not used thusfar that $K$ admits a nontrivial definable henselian valuation. Let $\phi(x,t)$ be an $\mathcal{L}_{\mathrm{ring}}$-formula with parameter $t\in K$ that defines in $K$ a nontrivial henselian valuation ring $\mathcal{O}_{t}$, i.e. $\phi(K,t)=\mathcal{O}_{t}$.

For $L\in\mathcal{L}$, let $\mathcal{O}_{t,L}$ denote the unique extension of $\mathcal{O}_{t}$ to $L$. Then $\mathcal{O}_{t,L}$ is henselian, thus $q$-henselian. Therefore $\mathcal{O}_{t,L}$ is comparable to $\mathcal{O}_{L}^{q}$, and so their restrictions to $K$ (which are $\mathcal{O}_{t}$ and $\mathcal{O}_{L}^{q}\cap K$) are comparable. Therefore
$$\mathcal{L}_{1}=\{L\in\mathcal{L}_{1}\;|\;\mathcal{O}_{L}^{q}\cap K\subseteq\mathcal{O}_{t}\}\sqcup\{L\in\mathcal{L}_{1}\;|\;\mathcal{O}_{t}\subset\mathcal{O}_{L}^{q}\cap K\}.$$
This allows us to distinguish two subcases: in {\bf Case 2a}, for \bf some \rm $L\in\mathcal{L}_{1}$ the ring $\mathcal{O}_{L}^{q}\cap K$ is a strict coarsening of $\mathcal{O}_{t}$; whereas in {\bf Case 2b}, for \bf every \rm $L\in\mathcal{L}_{1}$ the ring $\mathcal{O}_{L}^{q}\cap K$ is a refinement of $\mathcal{O}_{t}$.

In the meantime, we let $S:=\bigcup_{L\in\mathcal{L}_{1}}\mathcal{O}_{L}^{q}\cap K$, and note that $S$ is $\emptyset$-defined in $K$ by the formula
$$\exists \mathbf{y}\;(\Lambda_{1}(\mathbf{y})\wedge\phi_{q}(x,\mathbf{y})).$$
As $S$ is a union of valuation rings each of which is comparable to $\mathcal{O}_{t}$, $S$ is also comparable to $\mathcal{O}_{t}$. In fact, in {\bf Case 2a}, we have $\mathcal{O}_{t}\subset S$; and in {\bf Case 2b}, we have $S\subseteq\mathcal{O}_{t}$.

From now on we separate the subcases.

\bf Case 2a\rm: We suppose that for \bf some \rm $L'\in\mathcal{L}_{1}$ the ring $\mathcal{O}_{L'}^{q}\cap K$ is a strict coarsening of $\mathcal{O}_{t}$. If we let $\mathcal{L}_{1}':=\{L\in\mathcal{L}_{1}\;|\;\mathcal{O}_{t}\subset\mathcal{O}_{L}^{q}\cap K\}$ then our assumption may be rephrased as $\mathcal{L}_{1}'\neq\emptyset$. We will show that $S$ is a mixed characteristic nontrivial henselian valuation ring, and we already know that $S$ is $\emptyset$-definable in $K$. Note that, as discussed above, in this subcase we have $\mathcal{O}_{t}\subset S$, although we do not make direct use of this fact.

For each $L\in\mathcal{L}_{1}\setminus\mathcal{L}_{1}'$, we have
$$\mathcal{O}_{L}^{q}\cap K\subseteq\mathcal{O}_{t}\subset\mathcal{O}_{L'}^{q}\cap K\subseteq S.$$

Consequently
$$S=\bigcup_{L\in\mathcal{L}_{1}}\mathcal{O}_{L}^{q}\cap K=\bigcup_{L\in\mathcal{L}_{1}'}\mathcal{O}_{L}^{q}\cap K,$$
and therefore $S$ is a union of valuation rings each of which is a strict coarsening of $\mathcal{O}_{t}$.
$$
\xymatrix{
K \\
S=\bigcup_{L\in\mathcal{L}_{1}'}\mathcal{O}_{L}^{q}\cap K\ar@{-}[u] \\
\mathcal{O}_{t}\ar@{-}[u] \\
}
$$
By \autoref{fact:overrings.chain}, the coarsenings of $u_{t}$ form a chain under inclusion, and so $S$ is a union of a chain of valuation rings. Therefore $S$ is a valuation ring. Since $S$ coarsens $\mathcal{O}_{t}$, $S$ is henselian. Finally, since $S$ is a union of mixed characteristic valuation rings, $S$ has mixed characteristic. In particular, $S$ is nontrivial.

\bf Case 2b\rm: We suppose that for \bf every \rm $L\in\mathcal{L}_{1}$ the ring $\mathcal{O}_{L}^{q}\cap K$ is a refinement of $\mathcal{O}_{t}$. As noted above, we have $S\subseteq\mathcal{O}_{t}$.

Since $S$ contains a valuation ring (e.g. $\mathcal{O}_{L}^{q}\cap K$, for any $L\in\mathcal{L}_{1}$), the set of subrings of $K$ which contain $S$ is totally ordered, by \autoref{fact:overrings.chain}. Therefore, any (nonempty) union or intersection of rings containing $S$ is also a ring.

Let $u_{t}$ denote the valuation on $K$ corresponding to $\mathcal{O}_{t}$. We now consider a final distinction into (subsub)cases depending on the characteristic of $Ku_{t}$. Note that since $u_{t}$ is henselian, it is a refinement of $v_{K}$ which has mixed characteristic. Thus $\mathrm{char}(Ku_{t})\in\{0,p\}$.

If, for $s\in K$, $\phi(K,s)$ is a valuation ring then it will be denoted $\mathcal{O}_{s}$ and its corresponding valuation will be denoted $u_{s}$.

\bf Case 2b(i)\rm: Suppose that $\mathrm{char}(Ku_{t})=p$. Let
$$\mathcal{O}_{2}:=\bigcup\{\phi(K,s)\;|\;\mathcal{O}_{s}=\phi(K,s)\text{ is val ring},S\subseteq\mathcal{O}_{s},\mathrm{char}(Ku_{s})=p\}.$$
We have the following picture.
$$
\xymatrix{
K \\
\mathcal{O}_{2}\ar@{-}[u] \\
\mathcal{O}_{t}\ar@{-}[u] \\
S\ar@{-}[u] \\
}
$$
As noted above, $\mathcal{O}_{2}$ is a union of a chain of rings containing $S$, thus $\mathcal{O}_{2}$ is a valuation ring in $K$. In fact, since $\mathcal{O}_{2}$ is a union of mixed characteristic valuation rings, $\mathcal{O}_{2}$ has mixed characteristic. Thus $\mathcal{O}_{2}$ is nontrivial.

By \autoref{fact:overrings.chain}, since $\mathcal{O}_{t}=\phi(K,t)\subseteq\mathcal{O}_{2}$, we have that $\mathcal{O}_{2}$ is henselian.

Finally, note that $\mathcal{O}_{2}$ is $\emptyset$-defined in $K$ by the following formula.

$$\exists s\;\Big(\big(V_{\phi}(s)\wedge\forall y\;\big(y\in S\longrightarrow\phi(y,s)\big)\wedge\neg\phi(p^{-1},s)\big)\longrightarrow\phi(x,s)\Big),$$
where, as above, $V_{\phi}(s)$ is a formula defining those $s$ such that $\phi(K,s)$ is a valuation ring. This finishes \bf Case 2b(i)\rm.

\bf Case 2b(ii)\rm: Suppose that $\mathrm{char}(Ku_{t})=0$. Let
$$\mathcal{O}_{3}:=\bigcap\{\phi(K,s)\;|\;\mathcal{O}_{s}=\phi(K,s)\text{ is val ring},S\subseteq\mathcal{O}_{s},\mathrm{char}(Ku_{s})=0\}.$$
We have the following picture.
$$
\xymatrix{
K \\
\mathcal{O}_{t}\ar@{-}[u] \\
\mathcal{O}_{3}\ar@{-}[u] \\
S\ar@{-}[u] \\
}
$$
As noted above, as an intersection of a chain of rings containing $S$, $\mathcal{O}_{3}$ is a valuation ring in $K$. In fact, since $\mathcal{O}_{3}$ is an intersection of equal characteristic valuation rings, $\mathcal{O}_{3}$ has equal characteristic. Since $\mathcal{O}_{3}\subseteq\mathcal{O}_{t}$, $\mathcal{O}_{3}$ is nontrivial.

We claim that $\mathcal{O}_{3}$ is a coarsening of $\mathcal{O}_{K}$, i.e. $\mathcal{O}_{K}\subseteq\mathcal{O}_{3}$. To see this: let $L\in\mathcal{L}$. As noted above, $\mathcal{O}_{L}^{q}\cap K$ is comparable to $\mathcal{O}_{K}$. Either
$$\mathcal{O}_{K}\subseteq\mathcal{O}_{L}^{q}\cap K\subseteq S\subseteq\mathcal{O}_{3},$$
as required; or
$$\mathcal{O}_{L}^{q}\cap K\subset\mathcal{O}_{K}.$$
In the latter case, $\mathcal{O}_{K}$ and $\mathcal{O}_{3}$ are both coarsenings of $\mathcal{O}_{L}^{q}\cap K$; and so they are comparable, by \autoref{fact:overrings.chain}. Since $\mathcal{O}_{3}$ has residue characteristic zero, $\mathcal{O}_{K}\subset\mathcal{O}_{3}$. In either case, we have shown that $\mathcal{O}_{3}$ is a coarsening of $\mathcal{O}_{K}$. Consequently, $\mathcal{O}_{3}$ is henselian.

Finally, note that $\mathcal{O}_{3}$ is $\emptyset$-defined in $K$ by the following formula.

$$\forall s\;\Big(\big(V_{\phi}(s)\wedge\forall y\;\big(y\in S\longrightarrow\phi(y,s)\big)\wedge\phi(p^{-1},s)\big)\longrightarrow\phi(x,s)\Big),$$
where, as above, $V_{\phi}(s)$ is a formula defining those $s$ such that $\phi(K,s)$ is a valuation ring. This finishes \bf Case 2b(ii)\rm.
\end{proof}

\subsection{The full picture in mixed-characteristic}
We can now collect the facts we have proven for fields in $\mathcal{K}_{0,p}$ and assemble them to a proof
of \autoref{thm:main} (B):
\begin{proof}[Proof of part (B) of \autoref{thm:main}]
\label{proof:B}
We want to show that for each prime $p$, in the class $\mathcal{K}_{0,p}$ the complete picture is
$$\xymatrix{
{\bf(\emptyset\textbf{\rm\bf-def})}\ar@{<=>}[d]\ar@{=>}[r] & {\bf(eh)}\ar@{<=>}[d] \\
{\bf(def)}\ar@{=>}[r] & {\bf(h)}
}$$
Apart from the trivial implications as given in \autoref{fig:1},
we have shown in \autoref{cor:mc} that for any $K \in \mathcal{K}_{0,p}$
$$(\textbf{h}) \iff (\textbf{eh})$$
and furthermore in \autoref{thm:mix.char} that also
$$(\textbf{def}) \iff (\emptyset\textrm{-}\textbf{def})$$
holds. Finally, \autoref{ex:mc.no.def} shows that we have
$$(\textbf{h}) \centernot\implies (\emptyset\textrm{-}\textbf{def})$$
in $\mathcal{K}_{0,p}$. This completes the proof.
\end{proof}

\section*{Acknowledgements}

The authors would like to extend their thanks to the Nesin Mathematics Village (\url{https://matematikkoyu.org/eng/}) for its hospitality during the research visit on which this work was begun.  

\def\bibfont{\footnotesize}
\bibliographystyle{alpha}
\bibliography{bibliography}

\begin{thebibliography}{JSW15}

\bibitem[EP05]{EP05}
Antonio~J. Engler and Alexander Prestel.
\newblock {\em Valued fields}.
\newblock Springer Monographs in Mathematics. Springer-Verlag, Berlin, 2005.

\bibitem[FJ15]{Fehm-Jahnke15}
Arno Fehm and Franziska Jahnke.
\newblock On the quantifier complexity of definable canonical henselian
  valuations.
\newblock {\em Mathematical Logic Quarterly}, 61(4-5):347--361, 2015.

\bibitem[FP11]{Fehm-Paran11}
Arno Fehm and Elad Paran.
\newblock Non-ample complete valued fields.
\newblock {\em Int. Math. Res. Not. IMRN}, (18):4135--4146, 2011.

\bibitem[Hod97]{hod}
Wilfrid Hodges.
\newblock {\em A shorter {M}odel {T}heory}.
\newblock Cambridge University Press, Cambridge, 1997.

\bibitem[Hon14]{Hon14}
Jizhan Hong.
\newblock Definable non-divisible {H}enselian valuations.
\newblock {\em Bull. Lond. Math. Soc.}, 46(1):14--18, 2014.

\bibitem[JK15a]{Jahnke-Koenigsmann15}
Franziska Jahnke and Jochen Koenigsmann.
\newblock Defining coarsenings of valuations.
\newblock Preprint, available on ArXiv:1501.04506 [math.LO], to appear in
  Proceedings of the Edinburgh Mathematical Society, 2015.

\bibitem[JK15b]{Jahnke-Koenigsmann14}
Franziska Jahnke and Jochen Koenigsmann.
\newblock Uniformly defining $p$-henselian valuations.
\newblock {\em Annals of Pure and Applied Logic}, 166:741--754, 2015.

\bibitem[Joh15]{WJ}
Will Johnson.
\newblock On dp-minimal fields.
\newblock Preprint, available on ArXiv:1507.02745 [math.LO], 2015.

\bibitem[JSW15]{JSW}
Franziska Jahnke, Pierre Simon, and Erik Walsberg.
\newblock Dp-minimal valued fields.
\newblock Preprint, available on ArXiv:1507.03911 [math.LO], 2015.

\bibitem[Koe94]{Koe94}
Jochen Koenigsmann.
\newblock Definable valuations.
\newblock Preprint, 1994.

\bibitem[Kru15]{Kru15}
Krzysztof Krupi{\'n}ski.
\newblock Superrosy fields and valuations.
\newblock {\em Ann. Pure Appl. Logic}, 166(3):342--357, 2015.

\bibitem[Kuh14]{Kuhlmann}
Franz-Viktor Kuhlmann.
\newblock The algebra and model theory of tame valued fields.
\newblock Preprint, available on arXiv:1304.0194v2 [math.AC], to appear in J.
  reine angew. Math., 2014.

\bibitem[PD11]{PD}
Alexander Prestel and Charles~N. Delzell.
\newblock {\em Mathematical logic and model theory}.
\newblock Universitext. Springer, London, 2011.
\newblock A brief introduction, Expanded translation of the 1986 German
  original.

\bibitem[Pre14]{Pr14}
Alexander Prestel.
\newblock Definable henselian valuation rings.
\newblock Preprint, available on ArXiv:1401.4813 [math.AC], 2014.

\bibitem[PZ78]{Prestel-Ziegler78}
Alexander Prestel and Martin Ziegler.
\newblock Model-theoretic methods in the theory of topological fields.
\newblock {\em J. Reine Angew. Math.}, 299 (300):318--341, 1978.

\bibitem[TZ12]{TZ}
Katrin Tent and Martin Ziegler.
\newblock {\em A course in model theory}, volume~40 of {\em Lecture Notes in
  Logic}.
\newblock Association for Symbolic Logic, La Jolla, CA, 2012.

\end{thebibliography}
\end{document}